\def\ms{\medskip}
\def\1{{\mathds 1}}
\def\barintgerm_#1{\mathchoice
{\mathop{\vrule width 6pt height 3 pt depth -2.5pt
\kern -8.8pt \intop}\nolimits_{#1}}%
{\mathop{\vrule width 5pt height 3 pt depth -2.6pt
\kern -6.5pt \intop}\nolimits_{#1}}%
{\mathop{\vrule width 5pt height 3 pt depth -2.6pt
\kern -6pt \intop}\nolimits_{#1}}%
{\mathop{\vrule width 5pt height 3 pt depth -2.6pt \kern -6pt
\intop}\nolimits_{#1}}}
\theoremstyle{plain}
\newtheorem{theorem}{Theorem}
\newtheorem{lemma}[theorem]{Lemma}
\theoremstyle{definition}
\newtheorem{example}[theorem]{Example}
\newtheorem{definition}[theorem]{Definition}
\newtheorem{remark}[theorem]{Remark}
\numberwithin{equation}{section}
\numberwithin{theorem}{section}
\begin{document}

\title{K\"ahler geometry of bounded pseudoconvex  Hartogs
domains\thanks{This project was partially  supported by NSF of China (No.
11371257).
}}
\author{Yihong Hao
\footnote{Corresponding author}
\and  An Wang
}

\maketitle

\abstract

Let $\Omega$  be a bounded pseudoconvex Hartogs domain. There exists a natural complete
K\"ahler metric $g^{\Omega}$ in terms of its defining function.
In this paper, we study two problems. The first one  is determining when $g^{\Omega}$ is Einstein or extremal.
The second one is the existence of holomorphic isometric immersions of
$(\Omega, g^{\Omega})$ into finite or infinite dimensional complex  space forms.

\ms
\textbf{Key words:}
K\"ahler-Einstein metric, extremal metric, K\"ahler immersion, pseudoconvex Hartogs domain

\ms
\textbf{Mathematics Subject Classification (2000):} 32A07,  32Q15, 57R42

\tableofcontents

\section{Introduction}\label{sec:1}

In \cite{Cheng1980}, Cheng and Yau proved that there exist complete K\"ahler-Einstein metrics with
negative Ricci curvature on strictly pseudoconvex domains with $C^{k}$ boundary, $k\geq 5$.
This metric is unique if the metric is normalized by multiplication by a constant to have
the eigenvalues of the Ricci tensor identically $-1$. In \cite{Mok1983},
Mok and Yau extended this result to bounded pseudoconvex domains.
The proof in \cite{Cheng1980} involves solution by the continuity method of a Monge-Amp\`ere equation.
The solution actually is a special K\"ahler potential function of the K\"ahler-Einstein metric.
On homogeneous domains, the Bergman metric is the K\"ahler-Einstein metric with negative Ricci curvature $-1$.
Up to a constant, the Bergman kernel is the solution of the Monge-Amp\`ere equation introduced by Cheng and Yau.
In general, it is very difficult to obtain the solution.
In \cite{Wang2006}, Yin, Zhang, Roos and the second author of this paper obtained the explicit solutions for some nonhomogeneous
Cartan-Hartogs domains
by reducing the higher order nonlinear partial differential equation to an ordinary differential equation (see the definition
of Cartan-Hartogs domain in Example \ref{ex:CH}).
 In \cite{Wang2014}, we
generalized this result for some generalized Cartan-Hartogs domains over bounded symmetric domains.
Notice that the K\"ahler-Einstein metric we obtain is just the natural
complete K\"ahler metric given by a defining function of the domain.
This K\"ahler metric was  constructed  by  Cheng and Yau on the strictly pseudoconvex domain $\Omega$ with
$C^{k}$, $k\geq 5$ boundary in $\mathbb{C}^{n}$ firstly. To be precise,
let $F$ be a defining function for $\Omega$.
Define $g^{\Omega}$ whose K\"ahler potential is
$-\log (-F).$
Then they proved that $(\Omega, g^{\Omega})$ is a complete K\"ahler manifold whose Ricci tensor is ``asymtotically Einstein''.
Moveover, one can get a complete K\"ahler-Einstein metric by perturbing the metric $g^{\Omega}$.
On some special domains,  we can describe the distinction between
this natural K\"ahler metric and the Einstein-K\"ahler metric.
In our case, the  boundary condition  is no more necessary.

Recall the following definition firstly.
Let $D\subset \mathbb{C}^{d}$ be a domain and $\varphi$ be a continuous positive function on $D$.
The domain
\begin{equation}\label{equ:Hartogs domain}
\Omega=\Big\{(z_{0}, z)\in \mathbb{C}^{d_{0}}\times D : ||z_{0}||^{2}<\varphi(z)\Big\}
\end{equation}
is called a Hartogs domain over  $D$ with $d_{0}$-dimensional fibers.
Hartogs domains have  been investigated by many mathematicians for studying many problems in several complex variables. It is easy to see that $\Omega$ is pseudoconvex
if and only if $D$ is pseudoconvex and $-\log \varphi$ is plurisubharmonic.
Conversely,
Hartogs domains  can also be used to characterize pseudoconvex domains.
Let $D$ be a domain, then $D$ is pseudoconvex if and only if
the Hartogs-like domain $\{(z_{0},z)\in  \mathbb{C}^{d_{0}}\times D: z+\lambda z_{0}\in D, \lambda \in \mathbb{C}, |\lambda|\leq1  \}$ over $D$
 with balanced fibers is pseudoconvex \cite{Nikolov2013}.

From \cite{Loi2009}, we know $\mathrm{Engli\check{s}}$ had considered
the  bounded simply connected pseudoconvex  Hartogs  domain $(\Omega, g^{\Omega})$,
where $g^{\Omega}$ is the natural K\"ahler metric
whose K\"ahler potential is $-\log (\varphi(z)-||z_{0}||^{2})$.
Let $g_{B}^{\Omega}$  be the Bergman metric on $\Omega$.
He proved that if $g^{\Omega}=\lambda g_{B}^{\Omega}$
for some $\lambda\in \mathbb{R}^{+}$, then $g^{\Omega}$ is K\"ahler-Einstein.
By the results in \cite{Wang2014}, we know that
there also exist many examples whose natural K\"ahler metrics $g^{\Omega}$ are Einstein, while
$g^{\Omega}\neq\lambda g_{B}^{\Omega}$ for any $\lambda \in \mathbb{R}^{+}$.
Notice that many important properties of Hartogs domain can be characterized by the base such as
convex, smoothly (or real-analytically) bounded,
Bergman kernel \cite{Ligocka1989}, the Bergman completeness of non hyperconvex
domains \cite{Jarnicki2000}, $D^{*}$-extension property \cite{Thai1998}, $k$-hyperbolic \cite{Thai2000},
$K$-exhaustive \cite{Jarnicki2013} and so on.
So we prefer to  characterize the K\"ahler-Einstein metric of $(\Omega, g^{\Omega})$ in terms of its base.

Our first result is Theorem \ref{thm:1.1} which characterizes
two canonical metrics of the bounded pseudoconvex Hartogs domain in terms of the base.
The first one is the K\"ahler-Einstein metric. The second one is the extremal K\"ahler metric
which is one of the generalizations of K\"ahler-Einstein metric.
It was introduced by Calabi \cite{Calabi1982,Calabi1985}
for finding the canonical representant of a given K\"ahler classes $[\omega]$ of a complex  compact K\"ahler manifold $(M, J)$.
In the non compact case, the problem of finding extremal metrics is quite natural but difficult \cite{Xu1996}.
In \cite{Chang2002},  Chang  proved the existence of extremal metrics of a complete noncompact smooth surface.
In  \cite{Loi2010},  Loi and Zudda proved that the only extremal metric  is the hyperbolic metric for a strongly pseudoconvex Hartogs domain.
In \cite{Zedda2012}, Zedda considered the  Cartan-Hartogs domain endowed with a natural K\"ahler metric. He  proved that this metric  is
extremal if and only if it is Einstein.  Our theorem extends Zedda's result
for any bounded pseudoconvex Hartogs domain.
\begin{theorem}\label{thm:1.1}
Let
\begin{equation}\label{equ:Omega}
\Omega=\Big\{(z_{0}, z)\in \mathbb{C}^{d_{0}}\times D : ||z_{0}||^{2}<\varphi(z)\Big\}
\end{equation}
be a  bounded pseudoconvex Hartogs domain over $D\subset\mathbb{C}^{d}$,
where $-\log \varphi$ is a $C^{\infty}$ strictly
 plurisubharmonic exhaustion function on $D$.
Let $g^{D}$ and $g^{\Omega}$  be the K\"ahler metrics whose K\"ahler potentials are
$-\log\varphi(z)$ and $-\log [\varphi(z)-||z_{0}||^{2}]$ respectively.
If $g^{D}$ is a K\"ahler-Einstein metric,
then the following condition are equivalent:
\begin{description}
  \item[(i)] $g^{\Omega}$ is a K\"ahler-Einstein metric with Ricci curvature $-(d_{0}+d+1)$;
  \item[(ii)] $g^{\Omega}$ is an extremal metric;
  \item[(iii)]  The scalar curvature of $g^{\Omega}$ is a constant;
  \item[(iv)] The Ricci curvature of $g^{D}$  equals to $-(d+1)$.
\end{description}
\end{theorem}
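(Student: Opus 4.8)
The plan is to reduce everything to two computations on the base $D$: the determinant and the scalar curvature of $g^{\Omega}$. Writing $\Phi=\varphi(z)-\|z_0\|^2$ for the defining function, I first express $g^{\Omega}$ (potential $-\log\Phi$) as a Hermitian block matrix in the coordinates $(z_0,z)$: a $d_0\times d_0$ block $A$ from the $z_0$-directions, a $d\times d$ block $E$ from the base directions, and rank-one off-diagonal blocks $B,C$. A Sherman--Morrison/Schur-complement computation then yields the two facts that drive the whole argument: $\det A=\varphi/\Phi^{d_0+1}$ and the Schur complement $E-CA^{-1}B=(\varphi/\Phi)\,g^{D}$. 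Multiplying gives the clean identity
\begin{equation}
\det g^{\Omega}=\frac{\varphi^{d+1}}{\Phi^{d_0+d+1}}\,\det g^{D}.
\end{equation}

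Next I differentiate $-\log\det g^{\Omega}$ to read off the Ricci form. Since $\log\det g^{D}$ and $\log\varphi$ depend only on the base while $-\log\Phi$, $-\log\varphi$ are the potentials of $g^{\Omega}$, $g^{D}$, I obtain $\mathrm{Ric}(g^{\Omega})=(d+1)\,\omega^{D}-(d_0+d+1)\,\omega^{\Omega}+\rho^{D}$, where $\omega^{D},\rho^{D}$ are the K\"ahler and Ricci forms of $g^{D}$ pulled back to $\Omega$. Invoking the hypothesis that $g^{D}$ is K\"ahler--Einstein, $\rho^{D}=\lambda\,\omega^{D}$, this becomes $\mathrm{Ric}(g^{\Omega})=(d+1+\lambda)\,\omega^{D}-(d_0+d+1)\,\omega^{\Omega}$. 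The equivalence (i)$\Leftrightarrow$(iv) is then immediate: because $\omega^{D}$ carries no $dz_0$-components while $\omega^{\Omega}$ does, $\mathrm{Ric}(g^{\Omega})$ can be proportional to $\omega^{\Omega}$ only if the coefficient $(d+1+\lambda)$ vanishes, forcing $\lambda=-(d+1)$ and Einstein constant $-(d_0+d+1)$; conversely $\lambda=-(d+1)$ makes $g^{\Omega}$ Einstein with that constant.

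For the scalar curvature I trace the Ricci form against $g^{\Omega}$. Using that the lower-right block of $(g^{\Omega})^{-1}$ equals $(\Phi/\varphi)(g^{D})^{-1}$, I get $\mathrm{tr}_{g^{\Omega}}\omega^{D}=d\,\Phi/\varphi$, hence
\begin{equation}
S^{\Omega}=(d+1+\lambda)\,d\,\frac{\Phi}{\varphi}-(d_0+d+1)(d_0+d).
\end{equation}
Since $\Phi/\varphi=1-\|z_0\|^2/\varphi$ is non-constant on $\Omega$, $S^{\Omega}$ is constant iff $d+1+\lambda=0$, which gives (iii)$\Leftrightarrow$(iv). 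Together with the trivial implications (i)$\Rightarrow$(iii)$\Rightarrow$(ii) (K\"ahler--Einstein $\Rightarrow$ constant scalar curvature $\Rightarrow$ extremal), it remains only to prove (ii)$\Rightarrow$(iv), which I expect to be the main obstacle.

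For that last step write $a=d+1+\lambda$ and $f=\|z_0\|^2/\varphi$, so $S^{\Omega}=\mathrm{const}-a\,d\,f$. If $a\neq0$, extremality forces $\mathrm{grad}^{1,0}f$ to be a holomorphic vector field. Computing it with the block inverse, its base-components come out as $X^{i}=(\Phi/\varphi^{3})\big[(g^{D})^{-1}(\sigma p-t\,\bar p)\big]^{i}$, where $t=\|z_0\|^2$, $\sigma=\sum_{\alpha}(z_0^{\alpha})^2$, and $p_i=\partial_i\varphi$. Applying $\partial_{\bar z_0^{\gamma}}$ and using $\partial_{\bar z_0^{\gamma}}\sigma=0$, $\partial_{\bar z_0^{\gamma}}t=z_0^{\gamma}$ gives $\partial_{\bar z_0^{\gamma}}X^{i}=-(z_0^{\gamma}/\varphi^{3})\big[\sigma V^{i}+(\varphi-2t)W^{i}\big]$ with $V=(g^{D})^{-1}p$, $W=(g^{D})^{-1}\bar p$. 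Holomorphicity forces the bracket to vanish wherever $z_0^{\gamma}\neq0$; letting $z_0\to0$ leaves $\varphi\,W^{i}=0$, so $W\equiv0$, i.e. $\bar\partial\varphi\equiv0$, which is impossible since $-\log\varphi$ is strictly plurisubharmonic. Hence $a=0$, which is (iv). The delicate point throughout is keeping the Hermitian block bookkeeping straight, in particular distinguishing the holomorphic combination $\sigma$ from $t=\|z_0\|^2$, and it is exactly this $\bar z_0$-dependence that makes the base-gradient fail to be holomorphic unless the scalar curvature is already constant.
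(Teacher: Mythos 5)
Your reductions (i)$\Leftrightarrow$(iv) and (iii)$\Leftrightarrow$(iv) are correct and essentially identical to the paper's route (Lemma \ref{lem:2.1}, Lemma \ref{lem:Ricci}, Lemma \ref{lem:scalar} with $m=1$): the identity $\det g^{\Omega}=\varphi^{d+1}\Phi^{-(d_0+d+1)}\det g^{D}$, the resulting Ricci form, and $S^{\Omega}=(d+1+\lambda)d\,\Phi/\varphi-(d_0+d+1)(d_0+d)$ all check out, as do the trivial implications (i)$\Rightarrow$(iii)$\Rightarrow$(ii). The genuine gap is in your key step (ii)$\Rightarrow$(iv): your formula for the base components of $\mathrm{grad}^{1,0}f$, $f=t/\varphi$ with $t=\|z_0\|^2$, $\Phi=\varphi-t$, is wrong, and not just in its constants. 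Using the very block inverse you set up ($A^{-1}=\Phi(I-\bar z_0^{\,t}z_0/\varphi)$, Schur complement $(\varphi/\Phi)g^{D}$, off-diagonal inverse blocks proportional to $z_{0t}\,(g^{D})^{-1}\partial\varphi$), and writing $G$ for the Hermitian matrix of $g^{\Omega}$, the base components are
\begin{equation*}
X^{\alpha}=\sum_{u}\overline{(G^{-1})_{\alpha,0u}}\,\frac{z_{0u}}{\varphi}
+\sum_{\beta}\overline{(G^{-1})_{\alpha\beta}}\left(-\frac{t\,\varphi_{\overline{\beta}}}{\varphi^{2}}\right)
=\frac{\Phi}{\varphi^{3}}\,t\,W_{\alpha}-\frac{\Phi}{\varphi^{3}}\,t\,W_{\alpha}=0
\end{equation*}
identically, where $W=(g^{D})^{-1}\bar\partial\varphi$. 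The quantity $\sigma=\sum_{\alpha}(z_{0}^{\alpha})^{2}$ never appears: in every contraction the $\bar z_{0u}$ from the conjugated inverse metric pairs with the $z_{0u}$ from $\partial f/\partial\bar z_{0u}$, producing $t$, never $\sigma$. Your $\sigma$ is a conjugation bookkeeping slip (pairing $z_{0u}$ with $z_{0u}$) --- exactly the hazard you flagged. Since the base components vanish for \emph{every} value of $a$, your contradiction ``$W\equiv0$, hence $\bar\partial\varphi\equiv0$'' cannot be extracted from them; they carry no obstruction at all.

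The actual obstruction sits in the fiber components, which you never examine. A correct computation (easily verified on the ball, $\varphi=1-\|z\|^{2}$, $d_0=d=1$) gives
\begin{equation*}
\mathrm{grad}^{1,0}f=\frac{\Phi^{2}}{\varphi^{2}}\sum_{s}z_{0s}\,\frac{\partial}{\partial z_{0s}},
\qquad\text{so}\qquad
\frac{\partial}{\partial\bar z_{0u}}\Big[\frac{\Phi^{2}}{\varphi^{2}}\,z_{0s}\Big]
=-\frac{2\Phi}{\varphi^{2}}\,z_{0s}z_{0u}\not\equiv0,
\end{equation*}
and holomorphicity of $\mathrm{grad}^{1,0}S^{\Omega}=-ad\,\mathrm{grad}^{1,0}f$ therefore forces $a=0$, which is (iv). This is precisely the paper's proof of Theorem \ref{thm:2.2}: there one contracts $\bar\partial s$ against the inverse metric via the identities $z_{0t}=(\Phi^{2}/\varphi)\sum_{s}g^{\Omega}_{0s,\overline{0t}}z_{0s}$ and $\|z_0\|^{2}\varphi_{\overline{k\beta}}=-\Phi^{2}\sum_{s}g^{\Omega}_{0s,\overline{k\beta}}z_{0s}$, obtaining $\sum_{\beta}g_{\Omega}^{\beta,\overline{01}}\,\partial s/\partial\bar z_{\beta}=-\tau z_{01}\Phi^{2}/\varphi^{2}$, whose $\bar z_{01}$-derivative is $2\tau z_{01}^{2}\Phi/\varphi^{2}\neq0$ unless $\tau=0$. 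So your overall strategy is sound and the conclusion is salvageable, but as written the (ii)$\Rightarrow$(iv) step rests on an erroneous gradient formula and must be redone through the fiber components.
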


Actually, we study this problem in a more general case.
The sufficient conditions for  $g^{\Omega}$  is
Einstein or extremal can be described by the curvatures respectively.
Thus we also provide some extremal K\"ahler metrics. But they are not Einstein.

\ms
The study of holomorphic isometric immersions (called K\"ahler immersions in the sequel)
 between K\"ahler manifolds was started by  Calabi \cite{Calabi1953}.
 He solved the problem of deciding about the
existence of K\"ahler immersions between K\"ahler manifolds and complex space forms.
If K\"ahler immersions exist,
the K\"ahler manifolds are also called  K\"ahler submanifolds of complex space forms.
Afterwards, there appear many important studies about the characterization and classification of K\"ahler submanifolds
of complex space forms. For example, Di Scala and Loi gave a complete description of the K\"ahler immersions of
Hermitian symmetric spaces into complex space forms in \cite{Di Scala2007}.
Di Scala, Ishi and Loi also studied the K\"ahler immersions of
homogeneous K\"ahler manifolds into complex space forms in \cite{Di Scala2012}.
In order to study the existence of  nonhomogeneous K\"ahler-Einstein  submanifolds of infinite dimensional complex projective forms,
 Loi and Zeddy studied the K\"ahler immersions of Cartan-Hartogs domains in  \cite{Loi2011}.
Inspired by their work, we study the K\"ahler immersions of
$(\Omega, g^{\Omega})$  into complex space forms.

 \begin{theorem}\label{thm:1.2}
Let $\Omega$  be as in \eqref{equ:Omega}. Suppose that $\Omega$ is
a simply connected circular domain with center zero and
the function $-\log\varphi$ is the special K\"ahler potential determined by the diastatic function of $g^{D}$.
Then the existence of  K\"ahler immersions of  $(\Omega, g^{\Omega})$
 into complex space forms are completely determined  by the existence of K\"ahler immersions of $(D, g^{D})$
 into complex space forms (see Table \ref{tab:1} of Section \ref{sec:3.5} for details).
\end{theorem}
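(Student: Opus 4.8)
The plan is to reduce the whole statement to Calabi's criterion for the existence of holomorphic isometric immersions into complex space forms, phrased through the diastasis. Recall that a real-analytic Kähler manifold $(M,g)$ with diastasis $\cD$ centred at a point $p$ admits a Kähler immersion into $\mathbb{C}^{N}$ (resp. $\mathbb{C}P^{N}$, resp. $\mathbb{C}H^{N}$) if and only if the function $\cD$ (resp. $e^{\cD}-1$, resp. $1-e^{-\cD}$) can be written as $\sum_{i}|f_{i}|^{2}$ with $f_{i}$ holomorphic and vanishing at $p$; equivalently its Calabi matrix (the Hermitian matrix of the Taylor coefficients of $z^{j}\bar z^{k}$) is positive semidefinite of rank $\le N$, the infinite-dimensional forms corresponding to dropping the rank bound. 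The argument therefore hinges on identifying the diastasis of $g^{\Omega}$ explicitly and comparing its three associated quantities with those of $g^{D}$.

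First I would show that, under the standing hypotheses, the prescribed potentials are already the diastasis functions. Since $\Omega$ is circular with centre $0$, the potential $\Phi^{\Omega}:=-\log(\varphi(z)-\|z_{0}\|^{2})$ is invariant under $(z_{0},z)\mapsto(e^{i\theta}z_{0},e^{i\theta}z)$; a real-analytic invariant function has, at the origin, a Taylor expansion containing only monomials of equal holomorphic and antiholomorphic degree, hence no pure holomorphic or pure antiholomorphic terms, and it vanishes at the centre. By Calabi's characterization this forces $\Phi^{\Omega}=\cD^{\Omega}$, and likewise $-\log\varphi=\cD^{D}$ (the latter being precisely the ``special potential'' hypothesis). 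This step is where circularity and the diastasis-normalization of $-\log\varphi$ are indispensable: without them $\Phi^{\Omega}$ would be a mere potential and the hereditary property could not be invoked.

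Next I would record the fundamental identities. From $e^{-\cD^{\Omega}}=\varphi-\|z_{0}\|^{2}$ and $e^{-\cD^{D}}=\varphi$ one obtains at once
$$ e^{-\cD^{\Omega}}=e^{-\cD^{D}}-\|z_{0}\|^{2},\qquad e^{\cD^{\Omega}}=\frac{e^{\cD^{D}}}{1-\|z_{0}\|^{2}e^{\cD^{D}}},\qquad \cD^{\Omega}=\cD^{D}-\log\!\big(1-\|z_{0}\|^{2}e^{\cD^{D}}\big). $$
The hyperbolic case is then transparent: $1-e^{-\cD^{\Omega}}=(1-e^{-\cD^{D}})+\|z_{0}\|^{2}$, and as $\|z_{0}\|^{2}=\sum_{i=1}^{d_{0}}|z_{0}^{(i)}|^{2}$ is a sum of $d_{0}$ squares, a representation $1-e^{-\cD^{D}}=\sum_{j=1}^{N}|h_{j}(z)|^{2}$ upgrades to $1-e^{-\cD^{\Omega}}=\sum_{j}|h_{j}|^{2}+\sum_{i}|z_{0}^{(i)}|^{2}$, so $(D,g^{D})\hookrightarrow\mathbb{C}H^{N}$ produces $(\Omega,g^{\Omega})\hookrightarrow\mathbb{C}H^{N+d_{0}}$. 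For the flat and projective cases I would expand $-\log(1-\|z_{0}\|^{2}e^{\cD^{D}})$ and the geometric series for $e^{\cD^{\Omega}}$, substitute $\cD^{D}=\sum|h_{j}|^{2}$ (resp. $e^{\cD^{D}}-1=\sum|h_{j}|^{2}$), and use that sums and products of sums-of-squares are again sums-of-squares together with the factors $\|z_{0}\|^{2n}$; the exponential and geometric expansions produce infinitely many summands, so these immersions land in the infinite-dimensional forms $\mathbb{C}^{\infty}$ and $\mathbb{C}P^{\infty}$. Crucially, the defining inequality $\|z_{0}\|^{2}e^{\cD^{D}}=\|z_{0}\|^{2}/\varphi<1$ on $\Omega$ guarantees convergence of all these series on the whole of $\Omega$, so the $f_{i}$ are genuine global holomorphic functions there.

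For the converse I would simply restrict: the slice $\{z_{0}=0\}$ is a closed complex submanifold of $\Omega$ biholomorphic to $D$ on which $g^{\Omega}$ restricts to $g^{D}$, so restricting any Kähler immersion of $(\Omega,g^{\Omega})$ gives one of $(D,g^{D})$ into the same space form with no increase in dimension. Combining the two directions, and using that $\Omega$ is simply connected to pass from the local (diastasis-level) immersions to global ones via Calabi's rigidity/globalization theorem, yields the equivalences and dimension bookkeeping of Table~\ref{tab:1}. I expect the main obstacle to be the forward flat and projective cases: one must organize the infinite sum-of-squares expansions so that the resulting Calabi matrices are verifiably positive semidefinite and the dimension counts are correct, and confirm that the globally convergent series (convergence being supplied by $\|z_{0}\|^{2}<\varphi$) indeed define a single global immersion on the simply connected $\Omega$ rather than merely a germ at the centre.
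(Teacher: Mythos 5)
Your sufficiency arguments are correct but follow a genuinely different route from the paper's. The paper first proves Lemma \ref{lem:diastasis} by analytic continuation of the potential (using only the hypothesis that $-\log\varphi$ is the diastasis of $g^{D}$, not circularity), reduces to $d_{0}=1$ by induction (viewing $\Omega_{k+1}$ as a Hartogs domain over $\Omega_{k}$), and then uses the \emph{two separate} circle actions $(z_{01},z)\mapsto(e^{i\theta}z_{01},z)$ and $(z_{01},z)\mapsto(z_{01},e^{i\theta}z)$ to block-diagonalize the Calabi matrices of $\cD$, $e^{\cD}-1$ and $1-e^{-\cD}$ by fiber degree; each block is identified with the coefficient matrix of $\varphi^{-\sigma}$, $\varphi^{-(h+\sigma)}$ or $\varphi^{h-\sigma}$, i.e.\ with resolvability of rescaled base metrics, supplied via Lemmas \ref{lem:zedda2009} and \ref{lem:Di2012}. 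You instead write down global sum-of-squares expansions directly: $e^{n\cD^{D}}=\prod_{j}e^{n|h_{j}|^{2}}$ and $\bigl(1+\sum_{j}|h_{j}|^{2}\bigr)^{n+1}$ have positive coefficients, which handles arbitrary $d_{0}$ at once (no induction, since $\|z_{0}\|^{2}$ is itself a sum of $d_{0}$ squares), in effect reproves the needed direction of Zedda's lemma, and shows that for $h=1$ the paper's shifted condition ``$(D,(1+\sigma)g^{D})\hookrightarrow \mathbb{CP}^{\infty}$ for all $\sigma\in\mathbb{N}$'' collapses to $(D,g^{D})\hookrightarrow\mathbb{CP}^{N}$ via the binomial expansion. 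Your converse by restriction to the slice $\{z_{0}=0\}$ is cleaner than the paper's leading-minor argument. One small inaccuracy: circular invariance does not make $\Phi^{\Omega}$ vanish at the centre; $\Phi^{\Omega}(o)=-\log\varphi(0)=0$ comes from the diastasis normalization of $-\log\varphi$ (i.e.\ $\varphi(0)=1$), not from circularity.

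There is, however, a genuine gap on the non-existence side, which is part of the statement via Table \ref{tab:1}. That your expansions ``produce infinitely many summands'' does not rule out finite-dimensional immersions: a positive semidefinite coefficient matrix with infinitely many positive diagonal entries can have finite rank (the all-ones matrix has rank one). To get the $\times$ entries one needs the circularity-induced block-diagonal structure exactly as in the paper: the $1\times 1$ top-fiber-degree blocks $A_{z_{01}(i)}(0)>0$ and $B_{z_{01}(i)}(0)>0$ lie in \emph{distinct} diagonal blocks for every $i$, forcing infinite rank, combined with Calabi's rank rigidity and Lemma \ref{lem:Umehara} for the cross-form exclusions. Separately, note that your explicit hyperbolic map $(z_{0},z)\mapsto(z_{0},h_{1}(z),\dots,h_{N}(z))$ really is a Kähler immersion of $(\Omega,g^{\Omega})$ into the \emph{finite-dimensional} $\mathbb{CH}^{N+d_{0}}$ whenever $(D,g^{D})\hookrightarrow\mathbb{CH}^{N}$, $N<\infty$: indeed $\|z_{0}\|^{2}+\sum_{j}|h_{j}(z)|^{2}=\|z_{0}\|^{2}+1-\varphi(z)<1$ on $\Omega$, and the pullback of $-\log(1-\|w\|^{2})$ is exactly $-\log(\varphi-\|z_{0}\|^{2})$. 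This contradicts the $\times$ in row C, column $\mathbb{CH}^{n}$ of Table \ref{tab:1} and the fullness claim of Theorem \ref{thm:CHP} for finite $N$; the discrepancy is an error in the paper, not in your construction (take $D=\mathbb{B}^{d}$, $\varphi=1-\|z\|^{2}$: then $\Omega=\mathbb{B}^{d_{0}+d}$ with its hyperbolic metric, which trivially immerses into $\mathbb{CH}^{d_{0}+d}$), and it is consistent with the paper's own computation that for $h=1$ the blocks $C_{z_{01}(i)}(0)$ vanish for $i\geq 2$, so no infinite-rank conclusion is available in the hyperbolic case. So your proposal proves the sufficiency and converse directions correctly, exposes a flaw in one table entry, but as written does not establish the remaining non-existence assertions of the theorem.
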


\ms
The paper is organized as follows. In Section \ref{sec:2}, we
construct a natural K\"ahler metric on a
generalized bounded pseudoconvex Hartogs  domain.
The Ricci curvature and scalar curvature will be calculated directly by
the standard formulas.   Then we study the problem when this metric  is
Einstein or extremal. By our results in this section, Theorem \ref{thm:1.1} can be obtained immediately.
In Section \ref{sec:3}, after recalling some definitions and criterions for K\"ahler immersions, we discuss the existence
of  K\"ahler immersions of $(\Omega, g)$ into three types of complex space forms
respectively.

\section{Two canonical metrics of bounded pseudoconvex Hartogs domains}\label{sec:2}
In order to obtain more interesting properties, we prefer to study the bounded pseudoconvex Hartogs domain as follows:
\begin{equation}\label{equ:bounded pseudoconvex domain of Hartogs type}
\Omega=\Big\{(z_{0}, z)\in \mathbb{C}^{d_{0}}\times D : ||z_{0}||^{2}<\varphi(z)\Big\},
\end{equation}
where $D=D_{1}\times D_{2}\times \cdots \times D_{m}
\subset \mathbb{C}^{d_{1}}\times \mathbb{C}^{d_{2}}\times \cdots \times \mathbb{C}^{d_{m}}$, $m\in \mathbb{Z}^{+}$,
 is the product of finite bounded pseudoconvex domains,
and $\varphi=\prod_{i=1}^{m}\varphi_{i}$. Here $\varphi_{i}$ is a function on $D_{i}$ such that
$-\log \varphi_{i}$ is a $C^{\infty}$ strictly
 plurisubharmonic exhaustion function on $D_{i}$.
 Since $D_{i}$ is a bounded pseudoconvex domain, such $\varphi_{i}$  always exists.
Obviously, $-\log \varphi=-\sum_{i=1}^{m}\log \varphi_{i}$ is a  $C^{\infty}$ strictly
 plurisubharmonic exhaustion function on $D$.
Hartogs domain defined by   \eqref{equ:Hartogs domain}  can be obtained
from \eqref{equ:bounded pseudoconvex domain of Hartogs type} by taking $m=1$.

Let $F(z_{0},z)=||z_{0}||^2-\varphi(z)$.
The boundary $ \partial\Omega$ of $\Omega$ is consist of  two parts, i.e.
$$\partial\Omega=(\{0\}\times \partial D)\cup\partial_0\Omega,$$
 where $\{0\} \times \partial D =\{(0,z): z\in \partial D\}$
 and $\partial_0\Omega=\{(z_{0},z)\in \mathbb{C}^{d_{0}}\times D: F(z_{0},z)=0,z_{0}\neq0\}.$
Now we claim that $F$ is a local $C^{\infty}$ defining function of $\Omega$ at any fix boundary point
$\widetilde{p}=(\widetilde{z_{0}},\widetilde{z})\in\partial_0\Omega$.  In fact, let $V(\widetilde{z})\subset D$ be a
neighborhood of $\widetilde{z}$, $\mathbb{B}(\widetilde{z_{0}},r)$ be a
ball with radius $r<||\widetilde{z_{0}}||$. Then the neighborhood $U(\widetilde{p})=\mathbb{B}(\widetilde{z_{0}},r)  \times  V(\widetilde{z})$ of $\widetilde{p}$ satisfies
$$U(\widetilde{p})\cap \Omega=\left\{(z_{0},z)\in U(\widetilde{p}): F(z_{0},z)<0 \right\},$$ and
$dF(z_{0},z)\neq 0$ for $(z_{0},z)\in \partial_{0}\Omega.$
So the claim is true. Thus we know the boundary $\partial_0\Omega$ is always smooth.

\subsection{K\"ahler-Einstein metric}
Let $\Omega$ be as in \eqref{equ:bounded pseudoconvex domain of Hartogs type}.
Since $-\log\varphi_{i}$ is a $C^{\infty}$ strictly
 plurisubharmonic exhaustion function $\varphi_{i}$ on $D_{i}$,  it gives a global K\"ahler metric  on $D_{i}$, denoted by $g^{D_{i}}$.
 The K\"ahler form $\omega^{D_{i}}=\frac{\sqrt{-1}}{2}\partial \overline{\partial}(-\log\varphi_{i})$.
Hence, $(D_{i}, g^{D_{i}})$ is a  K\"ahler manifold. Let
\begin{equation}
(D, g^{D})=(D_{1}\times\cdots\times D_{m}, g^{D_{1}}\times\cdots\times g^{D_{m}}).
\end{equation}
If $g^{D_{i}}$, $i=1,2,\cdots,m$, are Einstein metrics  with the same Ricci curvature,
then $g^{D}$ is Einstein.

Notice that the function $-\log (-F)$ is a $C^{\infty}$
strictly plurisubharmonic function on $\Omega$, and $-\log (-F)\rightarrow \infty$ as $(z_{0}, z)\rightarrow \partial \Omega$.
It gives a global K\"ahler metric $g^{\Omega}$ of $\Omega$, i.e.
\begin{equation}\label{equ:Kahler form}
\omega^{\Omega}=\frac{\sqrt{-1}}{2}\partial \overline{\partial}(-\log (-F)).
\end{equation}

Let $(z_{0},z)=(z_{0},z_{1},z_{2},\cdots,z_{m})\in \mathbb{C}^{d_{0}}\times\mathbb{C}^{d_{1}}\times\cdots\times\mathbb{C}^{d_{m}}$ and
$d=\sum\limits_{i=1}^{m}d_{i}$,
where $z_{i}=(z_{i1},z_{i2},\cdots,z_{id_{i}})\in \mathbb{C}^{d_{i}}$.
In this coordinate, the  matrix of  $g^{\Omega}$ in \eqref{equ:Kahler form}, also denoted by the same notation,
 can be written as follows:
\begin{equation}\label{equ:g Omega}
g^{\Omega}=\left(g^{\Omega}_{j\alpha,\overline{k\beta}}\right),
\end{equation}
where the elements
$$
g^{\Omega}_{j\alpha, \overline{k\beta}}=-\frac{\partial^{2}\log \left(\varphi(z)-||z_{0}||^{2}\right)}{\partial z_{j\alpha}\partial \overline{z}_{k\beta}} \quad \text{for} ~0\leq j, k\leq m ~\text{and}~ 1\leq\alpha\leq d_{j}, 1\leq\beta\leq d_{k}.
$$

\begin{lemma}\label{lem:2.1}
Let $\Omega$ be as in \eqref{equ:bounded pseudoconvex domain of Hartogs type}.
If $g^{D_{i}}, i=1,\cdots,m,$ are K\"ahler-Einstein metrics with Ricci curvatures $c_{i}$ respectively,
then there exist some real-valued functions $ f_{i}$ on $D_{i}$ respectively,
such that
\begin{equation}
\det (g^{\Omega})=(\varphi-||z_{0}||^{2})^{-(d+d_{0}+1)}\prod_{i=1}^{m}\varphi_{i}^{d+1+c_{i}}e^{c_{i}f_{i}}.
\end{equation}
\end{lemma}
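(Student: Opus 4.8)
The plan is to reduce everything to one determinant identity relating $\det g^{\Omega}$ to $\det g^{D}$, and then feed in the Einstein condition on each factor. Throughout write $T=\varphi(z)-\|z_{0}\|^{2}>0$ on $\Omega$, so the potential is $-\log T$. First I would compute the complex Hessian of $-\log T$ and record its structure. Using $\partial^{2}(-\log T)/\partial z_{pa}\partial\overline{z}_{qb}=-T_{z_{pa}\overline{z}_{qb}}/T+T_{z_{pa}}T_{\overline{z}_{qb}}/T^{2}$ together with $T_{z_{0a}}=-\overline{z}_{0a}$, $T_{z_{0a}\overline{z}_{0b}}=-\delta_{ab}$, and $T_{z_{i\alpha}}=\varphi_{z_{i\alpha}}$, $T_{z_{i\alpha}\overline{z}_{j\beta}}=\varphi_{z_{i\alpha}\overline{z}_{j\beta}}$ for $i,j\geq1$, one sees that
\[
g^{\Omega}=-\tfrac{1}{T}H+\tfrac{1}{T^{2}}vv^{*},\qquad H=\begin{pmatrix}-I_{d_{0}}&0\\ 0&P\end{pmatrix},
\]
where $P=(\varphi_{z_{i\alpha}\overline{z}_{j\beta}})_{i,j\geq1}$ is the $d\times d$ complex Hessian of $\varphi$ in the base variables and $v=(T_{z_{pa}})=(-\overline{z}_{0},\,w)$ with $w=(\varphi_{z_{i\alpha}})_{i\geq1}$. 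The separation of variables in $T$ is what makes $H$ block diagonal.

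Next I would extract the determinant by the matrix determinant lemma $\det(A+vv^{*})=\det(A)\,(1+v^{*}A^{-1}v)$. With $A=-\tfrac1T H$, using $v^{*}H^{-1}v=-\|z_{0}\|^{2}+w^{*}P^{-1}w$ and $T+\|z_{0}\|^{2}=\varphi$, this gives
\[
\det g^{\Omega}=(-1)^{d}\,T^{-(d_{0}+d+1)}\,(\varphi-w^{*}P^{-1}w)\,\det P.
\]
The key observation is that $g^{D}$ has exactly the same shape: from the potential $-\log\varphi$ one gets $g^{D}=-\tfrac1\varphi P+\tfrac1{\varphi^{2}}ww^{*}$, and the same lemma yields $\det g^{D}=(-1)^{d}\varphi^{-(d+1)}(\varphi-w^{*}P^{-1}w)\det P$. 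The awkward factor $(\varphi-w^{*}P^{-1}w)\det P$ is common to both, so I can eliminate it without ever computing $\det P$ or $w^{*}P^{-1}w$ individually, obtaining
\[
\det g^{\Omega}=(\varphi-\|z_{0}\|^{2})^{-(d_{0}+d+1)}\,\varphi^{d+1}\,\det g^{D}.
\]
One should note that $P$ need not be invertible everywhere; this is harmless, since both displayed identities are continuous in $(z_{0},z)$ and hold on $\{\det P\neq0\}$, or one may instead use the invertibility-free form $\det(A+vv^{*})=\det A+v^{*}\,\mathrm{adj}(A)\,v$.

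Finally I would invoke the product and Einstein structure. Because $D=\prod_{i}D_{i}$ with $\varphi=\prod_{i}\varphi_{i}$ and $\log\varphi_{i}$ depending only on $z_{i}$, the metric $g^{D}$ is block diagonal, so $\det g^{D}=\prod_{i}\det g^{D_{i}}$ and $\varphi^{d+1}=\prod_{i}\varphi_{i}^{d+1}$. Each $g^{D_{i}}$ being K\"ahler--Einstein with Ricci curvature $c_{i}$ means $\partial\overline{\partial}\log\det g^{D_{i}}=c_{i}\,\partial\overline{\partial}\log\varphi_{i}$, i.e. $u_{i}:=\log\det g^{D_{i}}-c_{i}\log\varphi_{i}$ is pluriharmonic; since $c_{i}\neq0$ (the canonical metrics here have negative Ricci curvature), I may set $f_{i}:=u_{i}/c_{i}$, a real-valued function, so that $\det g^{D_{i}}=\varphi_{i}^{c_{i}}e^{c_{i}f_{i}}$. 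Substituting this and the product decomposition into the identity above gives
\[
\det g^{\Omega}=(\varphi-\|z_{0}\|^{2})^{-(d_{0}+d+1)}\prod_{i=1}^{m}\varphi_{i}^{d+1+c_{i}}e^{c_{i}f_{i}},
\]
as claimed.

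I expect the determinant identity of the second step to be the crux: the two rank-one reductions must be matched so that the non-explicit quantities $\det P$ and $w^{*}P^{-1}w$ cancel, and the bookkeeping of the signs $(-1)^{d}$ and the powers of $T$ and $\varphi$ is where errors are most likely to creep in. By contrast, translating the Einstein condition into pluriharmonicity and then into the exponential factor $e^{c_{i}f_{i}}$ is routine once $c_{i}\neq0$ has been observed.
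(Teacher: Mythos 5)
Your proof is correct, and it reaches the paper's key intermediate identity
$\det g^{\Omega}=(\varphi-\|z_{0}\|^{2})^{-(d_{0}+d+1)}\varphi^{d+1}\det g^{D}$
by a genuinely different route. The paper works with the explicit block Hessian \eqref{equ:Hession of solution} and performs elementary column operations (adding multiples $\overline{z}_{0t}\varphi_{\overline{k\beta}}/\varphi$ of the fiber columns, as in \eqref{equ:sum1}) to annihilate the off-diagonal block; the resulting block-triangular matrix has fiber block $(\varphi-\|z_{0}\|^{2})I^{(d_{0})}+\overline{z}_{0}^{t}z_{0}$, handled by $\det(I^{(p)}+A\overline{B}^{t})=\det(I^{(q)}+\overline{B}^{t}A)$, while the base block is identified directly with $\mathrm{diag}(g^{D_{1}},\dots,g^{D_{m}})$ via \eqref{equ:gDi} --- so the Hessian $P$ of $\varphi$ is never inverted, positivity of $\varphi$ is the only nondegeneracy used, and as a byproduct the reduction exhibits the block structure that the paper recycles later for the inverse metric and the Ricci and scalar curvature computations. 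You instead view both $g^{\Omega}$ and $g^{D}$ as rank-one perturbations of scaled Hessians of the defining functions and apply the matrix determinant lemma twice, cancelling the non-explicit common factor $(\varphi-w^{*}P^{-1}w)\det P$; this is cleaner and coordinate-light, and it yields the relation $\det g^{\Omega}=\varphi^{d+1}T^{-(d_{0}+d+1)}\det g^{D}$ for an \emph{arbitrary} base before the product structure of $D$ is ever invoked (the paper's Sylvester identity is exactly the rank-one case of your lemma, applied only to the fiber block). Your worry about $\det P=0$ is legitimate --- the set $\{\det P\neq 0\}$ need not be dense, so the continuity remark alone would not close the gap --- but your adjugate alternative $\det(A+vv^{*})=\det A+v^{*}\mathrm{adj}(A)v$ settles it completely, and in fact no division is needed at all, since the common factor equals $(-1)^{d}\varphi^{d+1}\det g^{D}\neq 0$ by positivity of $g^{D}$. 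Finally, your explicit flag that $f_{i}=u_{i}/c_{i}$ requires $c_{i}\neq 0$ is slightly more careful than the paper, which asserts \eqref{equ:det of gDi} without comment (for $c_{i}=0$ that formula would force $\det g^{D_{i}}\equiv 1$, so the Ricci-flat case is tacitly excluded in both treatments), and your $f_{i}$ is moreover pluriharmonic, matching the paper's stronger claim.
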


\begin{proof} For convenient, we define
 $$ \varphi_{j\alpha}:=\frac{\partial \varphi(z)}{\partial z_{j\alpha}},
 ~\varphi_{j\alpha,\overline{k\beta}}:=\frac{\partial^{2} \varphi(z)}{\partial z_{j\alpha}\partial \overline{z}_{k\beta}} ~\text{for} ~1\leq j, k\leq m.$$
By a straightforward computation, the metric
\begin{equation}\label{equ:Hession of solution}
g^{\Omega}=\frac{1}{(\varphi-||z_{0}||^{2})^{2}}\left(
  \begin{array}{c|c}
  & \\
(\varphi-||z_{0}||^{2})\delta_{st}+ \overline{z}_{0s}z_{0t}& -\overline{z}_{0s} \varphi_{\overline{k\beta}} \\
& \\
\hline
& \\
 -\varphi_{j\alpha}z_{0t} &\varphi_{j\alpha} \varphi_{\overline{k\beta}}-\varphi_{j\alpha,\overline{k\beta}}(\varphi-||z_{0}||^{2})  \\
 & \\
\end{array}
\right),
\end{equation}
where
the upper left block is $(\varphi-||z_{0}||^{2})I^{(d_{0})}+ \overline{z}_{0}^{t}z_{0}$
and the downer right block is a $d\times d$ submatrix.
Now we should make some elementary determinant calculations.
Indeed,  fix $1\leq s \leq m$, we know that
\begin{equation}\label{equ:sum1}
\sum_{t=1}^{m}((\varphi-||z_{0}||^{2})\delta_{st}+ \overline{z}_{0s}z_{0t})\frac{\overline{z}_{0t}\varphi_{\overline{k\beta}}}{\varphi}
=\overline{z}_{0s} \varphi_{\overline{k\beta}},
\sum_{t=1}^{m} -\varphi_{j\alpha}z_{0t} \frac{\overline{z}_{0t}\varphi_{\overline{k\beta}}}{\varphi}
=-\frac{\varphi_{j\alpha}\varphi_{\overline{k\beta}}||z_{0}||^{2}}{\varphi}.
\end{equation}
Under the elementary transformations above, the matrix can be transformed into
\begin{equation}\label{equ:Hession of solution 1}
\frac{1}{(\varphi-||z_{0}||^{2})^{2}}\left(
  \begin{array}{c|c}
     & \\
  (\varphi-||z_{0}||^{2})\delta_{st}+ \overline{z}_{0s}z_{0t} & 0 \\
     & \\
    \hline
  & \\
   -\varphi_{j\alpha}z_{0t}  &   (\varphi-||z_{0}||^{2})\varphi\frac{\varphi_{j\alpha} \varphi_{\overline{k\beta}}-\varphi_{j\alpha,\overline{k\beta}}\varphi}{\varphi^{2}}  \\
     & \\
  \end{array}
\right).
\end{equation}

Let $g^{D_{i}}_{\alpha\overline{\beta}}$ be the $(\alpha,\beta)$-entry of the K\"ahler-Einstein metric $g^{D_{i}}$,
i.e.
\begin{equation}\label{equ:gDii}
g^{D_{i}}_{\alpha\overline{\beta}}
=\frac{\partial^{2} (-\log \varphi_{i})}{\partial z_{i\alpha}\partial \overline{z}_{i\beta}}(z_{i},z_{i}).
\end{equation}
Since
\begin{equation}\label{equ:gDi}
\frac{\varphi_{j\alpha} \varphi_{\overline{k\beta}}-\varphi_{j\alpha,\overline{k\beta}}\varphi}{\varphi^{2}}=\left\{
\begin{array}{l}
g^{D_{j}}_{\alpha\overline{\beta}} \ \ \  \ \  \ \ \ \ \ \ \ \ \ j=k; \\
  0 \ \ \ \ \ \ \ \ \ \ \ \ \ \ \ \  \ \    j\neq k.
\end{array}
\right.
\end{equation}
This implies that the downer right block of \eqref{equ:Hession of solution 1} is a block diagonal matrix.
If $g^{D_{i}}$ is K\"ahler-Einstein, then
there exists a real-valued pluriharmonic function  $f_{i}$ on $D_{i}$ such that
\begin{equation}\label{equ:det of gDi}
\det g^{D_{i}}=\varphi_{i}^{c_{i}}e^{c_{i}f_{i}}.
\end{equation}
In order to obtain the determinant of $g^{\Omega}$, we recall a well known formulation:
$$\det(I^{(p)}+A\overline{B}^{t})=\det(I^{(q)}+\overline{B}^{t}A),$$
where $A$ is a $p\times q$ matrix, $B$ is a $q\times p$ matrix.
So we can get
\begin{equation}\label{equ:det of T2}
\det((\varphi-||z_{0}||^{2})I^{(d_{0})} +\overline{z}_{0}^{t}z_{0})=(\varphi-||z_{0}||^{2})^{d_{0}}(1+\frac{||z_{0}||^{2}}{\varphi-||z_{0}||^{2}})=(\varphi-||z_{0}||^{2})^{d_{0}-1}\varphi.
\end{equation}
Form \eqref{equ:gDi} and \eqref{equ:det of T2}, it follows that
$$\det (g^{\Omega})=\frac{\varphi^{d+1}}{(\varphi-||z_{0}||^{2})^{d+d_{0}+1}}\prod_{i=1}^{m}\det (g^{D_{i}}_{\alpha\overline{\beta}}).$$
Then, by \eqref{equ:det of gDi} we obtain
\begin{equation}
\det (g^{\Omega})=\frac{1}{(\varphi-||z_{0}||^{2})^{d+d_{0}+1}}\prod_{i=1}^{m}\varphi_{i}^{d+1+c_{i}}e^{c_{i}f_{i}}.
\end{equation}
We complete the proof.
\end{proof}

\ms
By using the standard formula of Ricci tensor, i.e.
$$\mathrm{Ric}_{j\alpha,\overline{k\beta}}=-\frac{\partial^{2}\log \det g^{\Omega}}{\partial z_{j\alpha} \partial \overline{z}_{k\beta} },$$
we can obtain the following lemma  directly.
\begin{lemma}\label{lem:Ricci}
Let $\mathrm{Ric}_{g}$ be the $\mathrm{Ricci}$ tensor  of $g^{\Omega}$. Then
\begin{equation}
\mathrm{Ric}_{g}=\left(
  \begin{array}{cccc}
      0 &0         & \cdots & 0  \\
    0 & \lambda_{1}g^{D_{1}}         & \cdots  & 0        \\
    \vdots &  \vdots                       & \ddots  &\vdots    \\
    0 &          0                 &\cdots   & \lambda_{m}g^{D_{m}}  \\
  \end{array}
\right)-(d+d_{0}+1)g^{\Omega},
\end{equation}
where  $\lambda_{i}=d+1+c_{i}$. Here $c_{i}$  denotes the Ricci curvature of $g^{D_{i}}$.
\end{lemma}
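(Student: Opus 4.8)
The plan is to substitute the determinant computed in Lemma~\ref{lem:2.1} directly into the standard Ricci formula $\mathrm{Ric}_{j\alpha,\overline{k\beta}}=-\partial_{j\alpha}\partial_{\overline{k\beta}}\log\det g^{\Omega}$ and then differentiate term by term. Taking the logarithm of the expression from Lemma~\ref{lem:2.1} splits $\log\det g^{\Omega}$ into three additive pieces:
$$\log\det g^{\Omega}=-(d+d_{0}+1)\log(\varphi-||z_{0}||^{2})+\sum_{i=1}^{m}(d+1+c_{i})\log\varphi_{i}+\sum_{i=1}^{m}c_{i}f_{i},$$
so that $\mathrm{Ric}_{g}$ decomposes into the sum of the complex Hessians (with an overall minus sign) of these three pieces. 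The whole computation is therefore a matter of handling each piece separately and reading off its contribution to the matrix.

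First I would treat the term $-(d+d_{0}+1)\log(\varphi-||z_{0}||^{2})$. Since $-\log(\varphi-||z_{0}||^{2})$ is exactly the K\"ahler potential of $g^{\Omega}$, we have by definition $g^{\Omega}_{j\alpha,\overline{k\beta}}=-\partial_{j\alpha}\partial_{\overline{k\beta}}\log(\varphi-||z_{0}||^{2})$, and hence the contribution of this first piece to $\mathrm{Ric}_{g}$ is precisely $-(d+d_{0}+1)g^{\Omega}_{j\alpha,\overline{k\beta}}$, producing the second summand in the claimed formula. Next I would observe that the remaining two pieces depend only on $z=(z_{1},\dots,z_{m})$ and not on $z_{0}$; consequently every Hessian entry with $j=0$ or $k=0$ vanishes, which accounts for the zero first row and first column of the leading matrix.

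For the entries with $j,k\geq 1$ I would use the product structure $\varphi=\prod_{i}\varphi_{i}$, where each $\varphi_{i}$ and each $f_{i}$ is a function of the block variable $z_{i}$ alone. This forces $\partial_{j\alpha}\partial_{\overline{k\beta}}\log\varphi_{i}=0$ unless $i=j=k$, so the contribution is block diagonal; and within a single block, relation~\eqref{equ:gDii} gives $\partial_{j\alpha}\partial_{\overline{j\beta}}\log\varphi_{j}=-g^{D_{j}}_{\alpha\overline{\beta}}$, so that the term $(d+1+c_{j})\log\varphi_{j}$ contributes $\lambda_{j}g^{D_{j}}_{\alpha\overline{\beta}}$ with $\lambda_{j}=d+1+c_{j}$. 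Finally, the terms $c_{i}f_{i}$ drop out entirely: the functions $f_{i}$ are real-valued \emph{pluriharmonic} (as established in the proof of Lemma~\ref{lem:2.1}), so their complex Hessians are identically zero. Assembling these contributions yields the block-diagonal matrix $\operatorname{diag}(0,\lambda_{1}g^{D_{1}},\dots,\lambda_{m}g^{D_{m}})$ minus $(d+d_{0}+1)g^{\Omega}$, which is the assertion. The only point demanding genuine care, rather than routine differentiation, is the bookkeeping of the block structure together with the explicit use of the pluriharmonicity of the $f_{i}$; once those two facts are in hand the result follows immediately, so I expect no substantial obstacle here.
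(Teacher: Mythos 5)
Your proposal is correct and is exactly the computation the paper intends: the paper proves this lemma simply by applying the standard formula $\mathrm{Ric}_{j\alpha,\overline{k\beta}}=-\partial^{2}\log\det g^{\Omega}/\partial z_{j\alpha}\partial\overline{z}_{k\beta}$ to the determinant from Lemma \ref{lem:2.1}, and your term-by-term expansion (the potential of $g^{\Omega}$ giving $-(d+d_{0}+1)g^{\Omega}$, the blocks $\lambda_{i}g^{D_{i}}$ from $\log\varphi_{i}$, and the pluriharmonic $f_{i}$ dropping out) is precisely the "direct" computation the paper leaves implicit. Your sign bookkeeping and use of the pluriharmonicity established in the proof of Lemma \ref{lem:2.1} are both accurate.
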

\begin{theorem}\label{thm:2.1}
Let $\Omega$ be as in \eqref{equ:bounded pseudoconvex domain of Hartogs type}.
If $g^{D_{i}}$, $1\leq i \leq m$, are K\"ahler-Einstein metrics with Ricci curvatures $c_{i}$ respectively,
then $g^{\Omega}$ is  Einstein
if and only if
$c_{i}=-(d+1)$ for all $1\leq i \leq m$.
\end{theorem}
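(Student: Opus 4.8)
The plan is to deduce everything directly from the formula for the Ricci tensor in Lemma \ref{lem:Ricci}. Set $N=d+d_{0}+1$ for the complex dimension of $\Omega$, and let $B$ denote the block-diagonal matrix appearing in that lemma: its block corresponding to the fiber variable $z_{0}$ is the zero matrix, and its $i$-th diagonal block ($1\leq i\leq m$) is $\lambda_{i}g^{D_{i}}$ with $\lambda_{i}=d+1+c_{i}$. Lemma \ref{lem:Ricci} then reads $\mathrm{Ric}_{g}=B-Ng^{\Omega}$, and the whole statement becomes a comparison of block structures.

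First I would dispatch sufficiency. If $c_{i}=-(d+1)$ for every $i$, then each $\lambda_{i}=0$, so $B=0$ and hence $\mathrm{Ric}_{g}=-Ng^{\Omega}$. Thus $g^{\Omega}$ is K\"ahler-Einstein, with Einstein constant $-(d+d_{0}+1)$, which already gives the value announced in Theorem \ref{thm:1.1}(i).

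Next I would treat necessity. Assuming $g^{\Omega}$ is Einstein, I write $\mathrm{Ric}_{g}=kg^{\Omega}$ for some constant $k$, so that the lemma yields $B=(N+k)g^{\Omega}$. The decisive step is to compare the fiber blocks (the $z_{0}$-$z_{0}$ blocks) on both sides. On the left this block of $B$ vanishes, while on the right the corresponding block of $g^{\Omega}$ is $\frac{1}{(\varphi-||z_{0}||^{2})^{2}}\left((\varphi-||z_{0}||^{2})I^{(d_{0})}+\overline{z}_{0}^{t}z_{0}\right)$ as computed in \eqref{equ:Hession of solution}, which is positive definite and in particular nonzero. Hence the scalar $N+k$ must vanish, forcing $k=-N$. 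Substituting back gives $B=0$, so $\lambda_{i}g^{D_{i}}=0$ for each $i$; since each $g^{D_{i}}$ is a genuine positive definite K\"ahler metric, I conclude $\lambda_{i}=0$, i.e. $c_{i}=-(d+1)$ for all $i$.

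I do not anticipate a serious obstacle, since both directions are formal consequences of Lemma \ref{lem:Ricci}. The only point that requires attention is recognizing that the vanishing of the fiber block of $B$ is precisely what pins the Einstein constant down to $-(d+d_{0}+1)$, and that this in turn propagates through the base blocks to force every $\lambda_{i}$, and hence every $c_{i}$, to the required value.
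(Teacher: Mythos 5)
Your proof is correct and is exactly the argument the paper intends: Theorem \ref{thm:2.1} is stated as a direct consequence of Lemma \ref{lem:Ricci}, and your block comparison---the vanishing fiber block of $B$ against the positive definite fiber block $\frac{1}{(\varphi-||z_{0}||^{2})^{2}}\bigl((\varphi-||z_{0}||^{2})I^{(d_{0})}+\overline{z}_{0}^{t}z_{0}\bigr)$ of $g^{\Omega}$, which pins the Einstein constant to $-(d+d_{0}+1)$ and then forces each $\lambda_{i}=0$ by positive definiteness of $g^{D_{i}}$---is precisely the reading the paper leaves implicit. No gaps; your write-up simply makes explicit what the paper asserts follows immediately from the Ricci tensor formula.
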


\subsection{Extremal metric}
In the following, we will try to obtain the criterion that  $g^{\Omega}$ is extremal.
Let $(M, g)$ be a $n$-dimensional  K\"ahler  manifold, and
 $(z_1,...,z_{n})$ be the local coordinate in  a neighbourhood of $p\in M$.
Let $s_{g}$ the scalar curvature of $g$, from Calabi's result in \cite{Calabi1982},
the extremal  condition can be given by the following equations:
\begin{equation}\label{equ:extremal condition}
\frac{\partial}{\partial \overline{z}_{\eta}}\sum_{\beta=1}^{n}g^{\beta\overline{\alpha}}
\frac{\partial s_{g}}{\partial \overline{z}_{\beta}}=0,
\end{equation}
for all $\alpha, \eta=1,\cdots, n$.

\ms
Now we calculate the scalar curvature of $g^{\Omega}$ firstly.
Let
 \begin{equation}\label{equ:g-1 Omega}
g_{\Omega}=\left(  g_{\Omega}^{j\alpha,\overline{k\beta}} \right)
\end{equation}
be the inverse matrix of $g^{\Omega}$, where $0\leq j, k\leq m$ and $1\leq\alpha\leq d_{j}$, $1\leq\beta\leq d_{k}$.
Let $g_{D_{i}}=(g_{D_{i}}^{\alpha\overline{\beta}})$ be the inverse matrix of
$g^{D_{i}}=(g^{D_{i}}_{\alpha\overline{\beta}})$ in \eqref{equ:gDii}. By the same method in Lemma \ref{lem:2.1}, a direct computation implies
\begin{equation}
g_{\Omega}^{j\alpha,\overline{k\beta}}=\left\{
\begin{array}{l}
\frac{\varphi-||z_{0}||^{2}}{\varphi}g_{D_{j}}^{\alpha\overline{\beta}} \ \ \  \   \ \ \ \  \ \ \ j=k; \\
  0 \ \ \ \ \ \ \ \ \ \ \ \ \ \ \ \  \ \ \ \ \ \ \ \ \  \ \ \ \   j\neq k,
\end{array}
\right.
\end{equation}
where $j\neq0, k\neq0$.
This shows that the downer right block of \eqref{equ:g-1 Omega} is a $d\times d$ block diagonal matrix.
\begin{lemma}\label{lem:scalar}
  Let $s_{g^{\Omega}}$ be the scalar curvature of $g^{\Omega}$, then
$$s_{g^{\Omega}}=\frac{\tau(\varphi-||z_{0}||^{2})}{\varphi}-(n+1)n,$$
where $\tau=(d+1)d+\sum_{i=1}^{m}c_{i}d_{i}$.
Furthermore, $s_{g^{\Omega}}$ is  constant if  and only if $\tau=0$.
\end{lemma}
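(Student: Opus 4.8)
The plan is to obtain $s_{g^\Omega}$ by contracting the inverse metric against the Ricci tensor, and then to read off when the resulting function is constant. Recall that the scalar curvature is the metric trace of the Ricci tensor,
\begin{equation*}
s_{g^\Omega}=\sum_{j\alpha,\,k\beta} g_\Omega^{j\alpha,\overline{k\beta}}\,\mathrm{Ric}_{j\alpha,\overline{k\beta}}.
\end{equation*}
First I would substitute the expression for $\mathrm{Ric}_{g}$ from Lemma~\ref{lem:Ricci}, which writes the Ricci tensor as a block-diagonal ``base'' piece $R$ (with a vanishing $z_0$-block and $\lambda_i g^{D_i}$ in the $i$-th fibre block, where $\lambda_i=d+1+c_i$) minus $(d+d_0+1)g^\Omega$. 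By linearity the contraction splits as
\begin{equation*}
s_{g^\Omega}=\sum g_\Omega^{j\alpha,\overline{k\beta}}R_{j\alpha,\overline{k\beta}}-(d+d_0+1)\sum g_\Omega^{j\alpha,\overline{k\beta}}g^\Omega_{j\alpha,\overline{k\beta}}.
\end{equation*}

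The second sum is just $\mathrm{tr}(g_\Omega g^\Omega)=n=d_0+d$, so that term contributes $-(d+d_0+1)n=-(n+1)n$. For the first sum, since the $z_0$-block of $R$ vanishes and both $R$ and the inverse metric are block diagonal on the fibre indices, only the diagonal blocks $j=k\geq 1$ survive. Here I would invoke the block form of the inverse metric computed just before the statement of the lemma, namely $g_\Omega^{i\alpha,\overline{i\beta}}=\frac{\varphi-\|z_0\|^2}{\varphi}\,g_{D_i}^{\alpha\overline\beta}$, and contract it against $\lambda_i g^{D_i}_{\alpha\overline\beta}$. Using $\sum_{\alpha,\beta}g_{D_i}^{\alpha\overline\beta}g^{D_i}_{\alpha\overline\beta}=d_i$, the first sum becomes $\frac{\varphi-\|z_0\|^2}{\varphi}\sum_{i=1}^m\lambda_i d_i$, and $\sum_i\lambda_i d_i=\sum_i(d+1+c_i)d_i=(d+1)d+\sum_i c_i d_i=\tau$. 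Assembling the two pieces gives exactly $s_{g^\Omega}=\tau(\varphi-\|z_0\|^2)/\varphi-(n+1)n$.

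For the constancy statement, since $-(n+1)n$ is constant, $s_{g^\Omega}$ is constant if and only if $\tau\,\frac{\varphi-\|z_0\|^2}{\varphi}=\tau\bigl(1-\frac{\|z_0\|^2}{\varphi}\bigr)$ is constant, i.e. if and only if $\tau\,\frac{\|z_0\|^2}{\varphi}$ is constant on $\Omega$. The direction $\tau=0\Rightarrow s_{g^\Omega}$ constant is immediate. For the converse the only point needing care --- and the main (if modest) obstacle --- is to argue that $\frac{\|z_0\|^2}{\varphi}$ is genuinely non-constant on $\Omega$: it vanishes identically on the slice $\{z_0=0\}$ and tends to $1$ as $(z_0,z)\to\partial_0\Omega$, so no nonzero multiple of it can be constant, forcing $\tau=0$. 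This completes the characterization.
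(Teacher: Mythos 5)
Your proof is correct and follows essentially the same route as the paper: contracting the block-diagonal Ricci tensor of Lemma \ref{lem:Ricci} against the inverse metric computed just before the lemma, using $\mathrm{tr}(g_{\Omega}g^{\Omega})=d_{0}+d$ and $\sum_{\alpha,\beta}g_{D_{i}}^{\beta\overline{\alpha}}g^{D_{i}}_{\alpha\overline{\beta}}=d_{i}$. Your closing observation that $\|z_{0}\|^{2}/\varphi$ is genuinely non-constant (vanishing on $\{z_{0}=0\}$, tending to $1$ at $\partial_{0}\Omega$) is a small detail the paper leaves implicit, and it is a welcome addition.
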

\begin{proof}
 According to the formula of the scalar curvature, it follows that
\begin{eqnarray*}
s_{g^{\Omega}}
&=&\sum_{i=1}^{m}\sum_{i\alpha,i\beta=i1}^{id_{i}}\lambda_{i}\frac{\varphi-||z_{0}||^{2}}{\varphi}
g_{D_{i}}^{\beta\overline{\alpha}}g^{D_{i}}_{\alpha\overline{\beta}}-(d+d_{0}+1)(d+d_{0})\\
&=&\sum_{i=1}^{m}\lambda_{i}d_{i}\frac{\varphi-||z_{0}||^{2}}{\varphi}-(d+d_{0}+1)(d+d_{0}).
\end{eqnarray*}
where $\lambda_{i}=(d+1+c_{i})$, and $c_{i}$  is the Ricci curvature of $g^{D_{i}}$.  Let $\tau=\sum_{i=1}^{m}\lambda_{i}d_{i}$, then
$\tau=(d+1)d+\sum_{i=1}^{m}c_{i}d_{i}$. Hence,  we can complete the proof.
\end{proof}

\begin{theorem}\label{thm:2.2}
Let $\Omega$ be as in \eqref{equ:bounded pseudoconvex domain of Hartogs type}
If $g^{D_{i}}$ is a K\"ahler-Einstein metric with Ricci curvature $c_{i}$,
 then
$g^{\Omega}$ in \eqref{equ:Kahler form} is extremal if and only if
its  scalar curvature $s_{g^{\Omega}}$ is a constant.
\end{theorem}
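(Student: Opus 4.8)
The statement contains two implications, and only one of them carries content. If $s_{g^\Omega}$ is constant, then every $\partial s_{g^\Omega}/\partial\overline{z}_{k\beta}$ vanishes and the extremal equation \eqref{equ:extremal condition} holds trivially, so $g^\Omega$ is extremal. Thus the plan is to prove the converse: an extremal $g^\Omega$ has constant scalar curvature. By Lemma \ref{lem:scalar} I can write $s_{g^\Omega}=\big[\tau-(d+d_0)(d+d_0+1)\big]-\tau\psi$ with $\psi:=\|z_0\|^2/\varphi$ and $\tau=(d+1)d+\sum_i c_id_i$, and $s_{g^\Omega}$ is constant exactly when $\tau=0$. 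Since \eqref{equ:extremal condition} is equivalent to the holomorphy of the gradient field whose components are $\sum_{k\beta}g_\Omega^{j\alpha,\overline{k\beta}}\,\partial s_{g^\Omega}/\partial\overline{z}_{k\beta}=-\tau\,(\mathrm{grad}^{1,0}\psi)^{j\alpha}$, it suffices to show that $\mathrm{grad}^{1,0}\psi$ is \emph{not} a holomorphic vector field; this forces $\tau=0$.

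To evaluate $\mathrm{grad}^{1,0}\psi$ in the fibre directions I need the $z_0$-block and the mixed block of the inverse matrix $g_\Omega$, whereas \eqref{equ:g-1 Omega} records only the lower right block. In principle I would extend the inversion of \eqref{equ:Hession of solution}–\eqref{equ:Hession of solution 1} using the same rank-one (Sherman--Morrison) manipulation as in the proof of Lemma \ref{lem:2.1}. The decisive simplification, however, is to localize at a good point. Because $-\log\varphi=-\sum_i\log\varphi_i$ is a $C^\infty$ strictly plurisubharmonic exhaustion of $D$, it attains an interior minimum at some $z^\ast\in D$, and there all first derivatives vanish: $\varphi_{j\alpha}(z^\ast)=0$ for every $j,\alpha$. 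Hence the off-diagonal blocks $-\overline{z}_{0s}\varphi_{\overline{k\beta}}$ and $-\varphi_{j\alpha}z_{0t}$ of $g^\Omega$ in \eqref{equ:Hession of solution} vanish along the fibre $\{z=z^\ast\}$, the matrix block-diagonalizes, and the only block I must invert is the ball-type block $(\varphi-\|z_0\|^2)I^{(d_0)}+\overline{z}_0^{\,t}z_0$, inverted by Sherman--Morrison exactly as in \eqref{equ:det of T2}.

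Carrying this out along $\{z=z^\ast\}$, where in addition $\partial\psi/\partial\overline{z}_{k\beta}=-\|z_0\|^2\varphi_{\overline{k\beta}}/\varphi^2=0$ for $k\ge1$, the fibre components reduce to $X^{0s}=\frac{u}{\varphi}z_{0s}-\frac{u}{\varphi^2}\big(\sum_t z_{0t}^2\big)\overline{z}_{0s}$, where $u=\varphi(z^\ast)-\|z_0\|^2$ and $\varphi=\varphi(z^\ast)$ is constant on the fibre. Differentiating, the part of $\partial X^{0s}/\partial\overline{z}_{0\eta}$ that is holomorphic and quadratic in $z_0$ equals $-\frac{1}{\varphi(z^\ast)}\big(z_{0s}z_{0\eta}+\delta_{s\eta}\sum_t z_{0t}^2\big)$, which cannot vanish identically (take $s=\eta$). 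Therefore $\partial X^{0s}/\partial\overline{z}_{0\eta}\not\equiv0$, so $\mathrm{grad}^{1,0}\psi$ is not holomorphic, and the extremal condition \eqref{equ:extremal condition} forces $\tau=0$. By Lemma \ref{lem:scalar} this is precisely the constancy of $s_{g^\Omega}$, completing the nontrivial implication.

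The main obstacle is the explicit inversion of $g^\Omega$ in the fibre directions, which \eqref{equ:g-1 Omega} does not provide. The device that keeps the argument short is the evaluation at the interior critical point $z^\ast$ of the exhaustion $-\log\varphi$: there the metric block-diagonalizes, the fibre becomes a ball carrying its standard metric, and $\mathrm{grad}^{1,0}\psi$ reduces to the gradient of $\|z_0\|^2$ on that ball, which is manifestly non-holomorphic.
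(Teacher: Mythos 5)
Your overall strategy is sound and genuinely different from the paper's. Both proofs reduce, via Lemma \ref{lem:scalar}, to showing that $\tau\neq 0$ makes the gradient field non-holomorphic, but the paper never localizes and never inverts the metric: it observes from \eqref{equ:Hession of solution} that $\partial s_{g}/\partial\overline{z}_{0t}$ and $\partial s_{g}/\partial\overline{z}_{k\beta}$ are each equal to $-\frac{\tau(\varphi-||z_{0}||^{2})^{2}}{\varphi^{2}}\sum_{s}g^{\Omega}_{0s,\overline{\,\cdot\,}}\,z_{0s}$, i.e.\ a fixed linear combination of rows of the metric itself, so that contracting with the inverse-metric column $g_{\Omega}^{\,\cdot\,,\overline{01}}$ collapses by $g_{\Omega}g^{\Omega}=I$ to the \emph{global} identity $-\tau z_{01}(\varphi-||z_{0}||^{2})^{2}/\varphi^{2}$, whose $\overline{z}_{01}$-derivative is visibly nonzero unless $\tau=0$. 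This is exactly the device that disposes of the obstacle you identified (the missing $z_{0}$- and mixed blocks of $g_{\Omega}$). Your alternative --- evaluating along the fibre over an interior critical point $z^{\ast}$ of the exhaustion $-\log\varphi$, where the metric block-diagonalizes and only the ball-type block needs Sherman--Morrison --- is a legitimate independent route, and the existence of $z^{\ast}$ is indeed guaranteed by the exhaustion hypothesis.

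However, your key fibre formula contains a contraction error that makes it wrong for $d_{0}\geq 2$. The barred index of the Hermitian inverse block, $g_{\Omega}^{0s,\overline{0t}}=u\bigl(\delta_{st}-\frac{z_{0s}\overline{z}_{0t}}{\varphi}\bigr)$ with $u=\varphi(z^{\ast})-||z_{0}||^{2}$, must pair with $\partial\psi/\partial\overline{z}_{0t}=z_{0t}/\varphi$, which produces $\sum_{t}\overline{z}_{0t}z_{0t}=||z_{0}||^{2}$, not $\sum_{t}z_{0t}^{2}$. The correct fibre components are $X^{0s}=\frac{u}{\varphi}z_{0s}-\frac{u\,||z_{0}||^{2}}{\varphi^{2}}z_{0s}=\frac{u^{2}}{\varphi^{2}}\,z_{0s}$, consistent with the paper's global expression restricted to $\{z=z^{\ast}\}$. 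A quick sanity check flags your version: $\bigl(\sum_{t}z_{0t}^{2}\bigr)\overline{z}_{0s}$ is not equivariant under unitary rotations of the fibre coordinate $z_{0}$, whereas the setup is; your formula agrees with the correct one only in the coincidental case $d_{0}=1$, since $z_{01}^{2}\overline{z}_{01}=||z_{0}||^{2}z_{01}$ (and you did not reduce to $d_{0}=1$ here). The repair is one line and in fact simplifies your endgame: $\partial X^{0s}/\partial\overline{z}_{0\eta}=-\frac{2u}{\varphi^{2}}z_{0s}z_{0\eta}\not\equiv 0$ on the fibre (take $s=\eta$, $z_{0}\neq 0$), so the gradient is not holomorphic and extremality forces $\tau=0$, as you intended. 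With that correction your proof is complete; as written, the decisive computation and the claimed ``holomorphic quadratic part'' of $\partial X^{0s}/\partial\overline{z}_{0\eta}$ rest on the faulty formula.
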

\begin{proof}
By Calabi's result, it suffices  to show  that
$g^{\Omega}$ is not an extremal metric if $\tau\neq 0$.
By \eqref{equ:Hession of solution}, for any fix $0\leq t \leq d_{0}$ and $1\leq (k\beta) \leq d_{k}$, $k\neq0$, we know
\begin{equation}
z_{0t}=\frac{(\varphi-||z_{0}||^{2})^{2}}{\varphi}\sum_{s=1}^{m}g^{\Omega}_{0s,\overline{0t}}z_{0s} \ \text{and}
\ \
||z_{0}||^{2}\varphi_{\overline{k\beta}}
=-(\varphi-||z_{0}||^{2})^{2}\sum_{s=1}^{m}g^{\Omega}_{0s,\overline{k\beta}} z_{0s}, k\neq 0.
\end{equation}
By Lemma \ref{lem:scalar} and the equations above, we can obtain
$$\frac{\partial s_{g}}{\partial \overline{z}_{0t}}=-\frac{\tau z_{0t}}{\varphi}=-\frac{\tau(\varphi-||z_{0}||^{2})^{2}}{\varphi^{2}}\sum_{s=1}^{m}g^{\Omega}_{0s,\overline{0t}}z_{0s},$$
$$\frac{\partial s_{g}}{\partial \overline{z}_{k\beta}}=\frac{\tau||z_{0}||^{2}\varphi_{\overline{k\beta}}}{\varphi^{2}}
=-\frac{\tau(\varphi-||z_{0}||^{2})^{2}}{\varphi^{2}}\sum_{s=1}^{m}g^{\Omega}_{0s,\overline{k\beta}} z_{0s}, k\neq 0.$$
Hence, we have
\begin{eqnarray*}
&& \sum_{t=1}^{d_{0}}g_{\Omega}^{0t,\overline{01}}\frac{\partial s_{g}}{\partial\overline{z}_{0t}}+\sum_{k=1}^{m}\sum_{\beta=1}^{d_{k}} g_{\Omega}^{k\beta,\overline{01}}\frac{\partial s_{g}}{\partial\overline{z}_{k\beta}}
\\
&=&-\frac{\tau(\varphi-||z_{0}||^{2})^{2}}{\varphi^{2}}(\sum_{t=1}^{d_{0}}g_{\Omega}^{0t,\overline{01}}\sum_{s=1}^{m}g^{\Omega}_{0s,\overline{0t}}z_{0s}
+\sum_{k=1}^{m}\sum_{\beta=1}^{d_{k}} g_{\Omega}^{k\beta,\overline{01}}\sum_{s=1}^{m}g^{\Omega}_{0s,\overline{k\beta}} z_{0s})
\\
&=&-\frac{\tau z_{01}(\varphi-||z_{0}||^{2})^{2}}{\varphi^{2}}.\\
\end{eqnarray*}
By comparing it with Calabi's extremal condition \eqref{equ:extremal condition}, we know that  $g^{\Omega}$ is not extremal if $\tau\neq0$.
Consequently, we complete the proof.
\end{proof}
\ms
Immediately, Theorem \ref{thm:1.1} will be a direct consequence by taking $m=1$.

\begin{proof}[The proof of Theorem \ref{thm:1.1}]
By Theorem \ref{thm:2.1}, (i)$\Leftrightarrow$(iv). By Lemma \ref{lem:Ricci}, Lemma \ref{lem:scalar},  (iv)$\Leftrightarrow$(iii).
By Theorem \ref{thm:2.2}, (iii)$\Leftrightarrow$(ii).
\end{proof}

\subsection{Some classical examples}
Now we will show some classical examples of bounded pseudoconvex Hartogs domains as an application of our results in previous section.
Recall some basic notions firstly. Let $\mathrm{Aut}(D)$  be the group of automorphisms of  a bounded domain $D\subset \mathbb{C}^{d}$.
 $D$ is called homogeneous if there exists a map $\Phi\in \mathrm{Aut}(\Omega)$ such that $\Phi(a)=b$ for two arbitrary points $a, b\in D$.
Moreover, $D$ is called symmetric if for every point $a\in D$, there exists an automorphism $\Phi\in \mathrm{Aut}(\Omega)$ such that  $\Phi(a)=a$, $\Phi\circ\Phi=id$,
and $a$ is an isolated point of the set $\{z\in D: \Phi(z)=z\}$.
The irreducible bounded symmetric domains have been
completely classified up to a biholomorphic isomorphism due to E. Cartan \cite{Cartan1935}.
Each bounded symmetric domain is biholomorphic to a Cartesian product of domains belongs to the following six Cartan types.

Let $M_{m,n}$ be the space of $m\times n$-complex matrices, $I$ be the unit $p\times p$-matrix,
  $\bar{z}$ be the conjugate matrix of $z$, $z^{t}$ be the transposed matrix of $z$.
  If a square matrix  $A$ is positive definite, then we denote it by $A>0$.
  The list of irreducible bounded symmetric domains and the corresponding generic norms is as follows.
  \begin{description}
  \item [Type I] $(1\leq m\leq n)$: \quad \  \ \  $D_{\mathrm{I}}=\left\{z\in M_{m,n}(\mathbb{C}): I-z\bar{z}^{t}>0\right\}$,
   $N(z,w)=\det(I-z\overline{w}^{t})$.
  \item [Type II] $(m=n\geq 5)$:  \ \quad  $D_{\mathrm{II}}=\big\{z\in D_{\mathrm{I}}: z=-z^{t}\big\}$,
  \ \ \quad \quad \quad \ \  $N(z,w)=\det(I+z\overline{w})$.
  \item [Type III]  $(m=n\geq 2)$:  \ \ \  $D_{\mathrm{III}}=\big\{z\in D_{\mathrm{I}}: z=z^{t}\big\}$,
  \ \quad \quad \quad \quad \  $N(z,w)=\det(I-z\overline{w}).$
  \item [Type IV] $ (m\geq 5)$: \ \ \ \quad \quad  $D_{\mathrm{IV}}=\big\{z\in\mathbb{C}^{m}: 1-2q(z,\bar{z})+|q(z,z)|^{2}>0,
   |q(z,\bar{z})|<1\big\}$,
   $$ N(z,w)=1-q(z,w)+q(z,z)q(w,w), ~\text{where}~~ q(z,w)=\sum\limits_{j=1}^{m}z_{j}w_{j}.$$
  \item [Type V:] \ $D_{\mathrm{V}}=\big\{z\in M_{2,1}(O_{\mathbb{C}}):1-(z|z)+(z^{\sharp}|z^{\sharp})>0, 2-(z|z)>0\big\},$
  $$N(z,w)=1-(z|w)+(z^{\sharp}|w^{\sharp}).$$
\item [Type VI:]
     $D_{\mathrm{VI}}=\big\{z\in M_{3,3}(O_{\mathbb{C}}):1-(z|z)+(z^{\sharp}|z^{\sharp})-|\det z|^{2}>0$,
     $3-2(z|z)+(z^{\sharp}|z^{\sharp})>0,3-(z|z)>0\big\},$
  $$N(z,w)=1-(z|w)+(z^{\sharp}|w^{\sharp})-\det z \overline{\det w}.$$
where $O_{\mathbb{C}}=\mathbb{C}\otimes \mathbb{O}$ is complex 8 dimensional Cayley algebra.
$M_{3,3}(O_{\mathbb{C}})$ is the space of $3\times3$ matrices with entries in the space $O_{\mathbb{C}}$ of octonions over $\mathbb{C}$,
 which are Hermitian with respect to the Cayley conjugation. $z^{\sharp}$ is the adjoint matrix in
$M_{3,3}(O_{\mathbb{C}})$ and $(z|w)$ is the standard Hermitian product in $M_{3,3}(O_{\mathbb{C}})$, $M_{2,1}(O_{\mathbb{C}})$ is a
 subspace of $M_{3,3}(O_{\mathbb{C}})$.
  \end{description}

\ms
The domains of types $\mathrm{I-IV}$  are classical,
$D_{\mathrm{V}}$ and  $D_{\mathrm{VI}}$ are the exceptional 16 and 27 dimensional domains.
 These domains are also called Cartan domains.
Let $g^{D}_{B}$, $K(z,z)$, $r$, $\gamma$, $V(D)$  be the Bergman metric,
Bergman kernel, rank, genus, volume  of Cartan domain $D$ respectively, then
\begin{equation}\label{equ:det of gD}
\det g^{D}_{B}=\gamma^{d}V(D)K(z,z).
\end{equation}
Obviously, the Ricci curvature is $-1$.
This result is also true for any bounded homogeneous domain.
For Cartan domain,  the connection between the generic norm $N(z,z)$
and the Bergman kernel $K(z,z)$
 is
\begin{equation}\label{equ:the connection of N and K}
V(D)K(z,z)=N(z,z)^{-\gamma}.
\end{equation}
The Wallach set can be described as follow.
\begin{equation}\label{equ:wallach set}
  W(D)=\{0,\frac{a}{2},\cdots,(r-1)\frac{a}{2}\}\cup((r-1)\frac{a}{2},\infty),
\end{equation}
where  $a, b$ are invariant numbers. For more details we refer the reader to  \cite{Arazy1995}.

\ms
\begin{example}
\label{ex:CH}
Let $ D\subset \mathbb{C}^{d} $ be a Cartan domain.
Let $g^{D}$ be the  metric generated by the function $-\mu\log N(z,z)$, $\mu>0$.
 Thus $g^{D}=\frac{\mu}{\gamma}g^{D}_{B}$ is a K\"ahler-Einstein metric with Ricci curvature $-\frac{\gamma}{\mu}$.
Now, we take
$\varphi(z)=N(z,z)^{\mu}$, then we get the so-called Cartan-Hartogs domain introduced by Yin and Roos \cite{Wang2006}:
  \begin{equation}\label{equ:C-H}
\Omega_{CH}=\Big\{(z_{0}, z)\in \mathbb{C}^{d_{0}}\times D : ||z_{0}||^{2}<N(z,z)^{\mu}, \mu >0 \Big\}.
\end{equation}
It is homogeneous if and only if $\Omega_{CH}$ is a ball, i.e. $D=\mathbb{B}^{d}$ and $\mu=1$ (see \cite{Ahn2012}).
Let
$g^{\Omega_{CH}}$ be the metric generated by $-\log(N(z,z)^{\mu}-||z_{0}||^{2})$.
By Theorem \ref{thm:1.1}, $g^{\Omega_{CH}}$ is extremal if and only if it is K\"ahler-Einstein, i.e. $\mu=\frac{\gamma}{d+1}$.
This is coincide with Loi and Zedda's results in \cite{Zedda2012}.
Now we consider a Hartogs domain over Cartan-Hartogs domain.
  \begin{equation}
\widetilde{\Omega}_{CH}=\Big\{(\widetilde{z}_{0},z_{0}, z)\in \mathbb{C}^{\widetilde{d}_{0}}\times \Omega_{CH} : ||\widetilde{z}_{0}||^{2}<\varphi(z)\Big\}.
\end{equation}
where $-\log\varphi(z)=-\log(N(z,z)^{\mu}-||z_{0}||^{2})$ is a strictly plurisubharmonic exhaustion function on $\Omega_{CH}$.
Hence,
$-\log (N(z,z)^{\mu}-||z_{0}||^{2}-||\widetilde{z}_{0}||^{2})$  generates a complete K\"ahler-Einstein metric with Ricci curvature $-(d+d_{0}+\widetilde{d}_{0}+1)$
if $\mu=\frac{\gamma}{d+1}$.
In fact, $\widetilde{\Omega}_{CH}$ is also a Cartan-Hartogs domain with $(\widetilde{d}_{0}+d_{0})$-dimensional fibers, i.e.
  \begin{equation}
\widetilde{\Omega}_{CH}=\Big\{(\widetilde{z}_{0},z_{0}, z)\in \mathbb{C}^{\widetilde{d}_{0}}\times \mathbb{C}^{d}_{0}\times D: ||\widetilde{z}_{0}||^{2}+||z_{0}||^{2}<N(z,z)^{\mu}\Big\}.
\end{equation}
\end{example}

\ms
\begin{example}
Let $ D\subset \mathbb{C}^{d} $ be  a bounded homogeneous domain,
 $K(z,z)$ be the Bergman kernel, $\mathrm{V}(D)$ be the volume of $D$.
Let $g^{D}$ be the metric generated by $\log K(z,z)^{\nu}$ with $\nu>0$.
Thus $g^{D}=\nu g^{D}_{B}$ is a K\"ahler-Einstein metric with the Ricci curvature $-\frac{1}{\nu}$.
We take
$\varphi(z)=K(z,z)^{-\nu}$, then we get the so-called Bergman-Hartogs domain:
  \begin{equation}\label{equ:B-H}
\Omega_{BH}=\Big\{(z_{0}, z)\in \mathbb{C}^{d_{0}}\times D : ||z_{0}||^{2}<K(z,z)^{-\nu}, \nu >0 \Big\}.
\end{equation}
 Let $g^{\Omega_{BH}}$ be the metric generated by $-\log(K(z,z)^{-\nu}-||z_{0}||^{2})$.
Then $g^{\Omega_{BH}}$ is extremal if and only if it is K\"ahler-Einstein, i.e. $\nu=\frac{1}{d+1}$.

\ms
\end{example}
\begin{example} Fock-Bargmann-Hartogs domain:
  \begin{equation}\label{equ:FBH}
\Omega_{FBH}=\Big\{(z_{0}, z)\in \mathbb{C}^{d_{0}}\times \mathbb{C}^{d} : ||z_{0}||^{2}<e^{-\mu||z||^{2}}, \mu>0 \Big\}.
\end{equation}
Although this domain is unbounded, our result  is also valid.
The metric of $\mathbb{C}^{d}$  given by $-\log \varphi=\mu||z||^{2}$
is flat, thus the K\"ahler metric whose K\"ahler potential is $-\log(e^{-\mu||z||^{2}}-||z_{0}||^{2})$ is not Einstein.
\end{example}

\section{K\"ahler immersions  in complex space forms}\label{sec:3}
In this section, we will study the existence of K\"ahler immersions of
bounded pseudoconvex Hartogs domains  into complex space forms.
The complex space forms are K\"ahler manifolds of constant holomorphic
sectional curvatures. Assume that they are complete and simply connected.
According to the sign of the constant holomorphic sectional curvature,
there are three types:
\begin{description}
  \item[(1)] Complex Euclidean space $(\mathbb{C}^{N}, g_{0}), N\leq +\infty$, where
$g_{0}$ denotes the flat metric. Here $\mathbb{C}^{\infty}$ is the complex Hilbert space $\ell^{2}(\mathbb{C})$ consisting
of sequences $z_{j}\in \mathbb{C},$ $j = 1,\cdots, $ such that $\sum_{j=1}^{\infty}|z_{j}|^{2} < +\infty$.
  \item[(2)] Complex hyperbolic space $\mathbb{CH}^{N}, N\leq +\infty,$ namely the unit ball
in $\mathbb{C}^{N}$, $\sum_{j=1}^{N}|z_{j}|^{2} < +\infty$ endowed with the hyperbolic metric $g_{hyp}$ of
holomorphic sectional curvature being $-4$, whose associated K\"ahler form is
$$\omega_{hyp}=-\frac{\sqrt{-1}}{2} \partial \overline{\partial}\log\sum_{j=1}^{N}(1-|z_{j}|^{2}).$$
  \item[(3)] Complex projective space $\mathbb{CP}^{N}, N\leq +\infty$, with the Fubini-Study
metric $g_{FS}$ of holomorphic sectional curvature being $4$. If $\omega$ denotes
the K\"ahler form associated to $g_{FS}$. Let $[Z_{0}, \cdots,Z_{N}]$ be the  homogeneous coordinates
in  $\mathbb{CP}^{N}$. In the affine chart $U_{0}=\{Z_{0}\neq 0\}$ endowed with coordinates $(z_{1},\cdots,z_{N})$, $z_{j}=\frac{Z_{j}}{Z_{0}}$.
 $$\omega_{FS}=-\frac{\sqrt{-1}}{2} \partial \overline{\partial}\log\sum_{j=1}^{N}(1+|z_{j}|^{2}).$$

\end{description}
\subsection{Some criterions}

A fundamental criterion is Calabi's criterion which is based on the \emph{diastatic function}.
Let us introduce the definition firstly.
Let $(M,~g)$ be a $n$-dimensional K\"ahler manifold. In a local complex coordinate $(U, z)$,
the K\"ahler form  $\omega=\frac{\sqrt{-1}}{2}\partial \overline{\partial}\Phi$,
where $\Phi$ is a   local K\"ahler potential.
If $(M, g)$  admits a K\"ahler immersion into  a complex space from,
Calabi proved that $g$ must be real analytic
(see Theorem 8 in \cite{Calabi1953}). In this case,
the K\"ahler potential $\Phi(z)$ can be expressed as a power series in the $2n$ real
variables in $U$.
Substitute  $\Phi(z,\overline{z})$ for $\Phi(z)$.
Let $p$ and $q$ denote two arbitrary points in $U$,
 $z(p)$ and $z(q)$ denote the  local complex coordinate of  $p$ and $q$ respectively,
then $\Phi(z(p),\overline{z(q)})$ is a real analytic function in $U\times \overline{U}$.
Then Calabi introduced the \emph{diastatic function}:
\begin{equation}\label{equ:diastatic}
D(p,q)=\Phi(z(p),\overline{z(p)})+\Phi(z(q),\overline{z(q)})-\Phi(z(p),\overline{z(q)})-\Phi(z(q),\overline{z(p)}).
\end{equation}

One of the elementary properties  of the diastatic function is that
it is uniquely determined by the K\"ahler metric and independent of the local K\"ahler potential function
(see \cite{Calabi1953}).
Obviously, $D(p,q)=D(q,p)$.
Suppose that the origin $o \in U$, then $D(o,q)$ is a special K\"ahler potential that determined by
the diastatic function.
In a neighbourhood of the origin $o$, the power series of $D(o,q)$  is
\begin{equation}\label{equ:D a}
\displaystyle{D_{o}(q)=D(o,q)=\sum_{\alpha,\beta\geq0} a_{\alpha\beta}z^{\alpha}\overline{z}^{\beta}},
\end{equation}
where the multi-indexes $\displaystyle{\alpha=(\alpha_{1},\cdots,\alpha_{n}),}$
$\beta=(\beta_{1},\cdots,\beta_{n}),$
$z^{\alpha}=\prod\limits_{j=1}^{n}(z_{j})^{\alpha_{j}},
\overline{z}^{\beta}=\prod\limits_{j=1}^{n}(\overline{z}_{j})^{\beta_{j}}$.

Now define an ordering for the set of multi-indexs.
Consider  two arbitrary different multi-indexs $\alpha=(\alpha_{1},\alpha_{2},\cdots,\alpha_{n})$,
$\widetilde{\alpha}=(\widetilde{\alpha}_{1},\widetilde{\alpha}_{2},\cdots,\widetilde{\alpha}_{n})$.
\begin{description}
  \item[(1)] If $|\alpha|\neq|\widetilde{\alpha}|$, then
$\alpha>\widetilde{\alpha}\Leftrightarrow  |\alpha|>|\widetilde{\alpha}|$;
  \item[(2)] If $|\alpha|=|\widetilde{\alpha}|$, then
$\alpha>\widetilde{\alpha}\Leftrightarrow  \exists ~1\leq j\leq n$ such that
$\alpha_{j}<\widetilde{\alpha}_{j}$, and $\alpha_{i}=\widetilde{\alpha}_{i}$ for any $1\leq i\leq j-1$.
\end{description}
Under the ordering, the multi-indexes can be denoted by
$m_{0}=(0,0\cdots,0)$,  $m_{1}=(1,0\cdots,0)$, $\cdots$, $m_{n}=(0,\cdots,0,1),\cdots$.
The diastatic function can be described  in terms of Bochner coordinate and
$$D_{o}(q)=D(o,q)=\sum\limits_{j,k=0}^{\infty} a_{j,k}(z)^{m_{j}}(\overline{z})^{m_{k}},$$
where the multi-index $m_{j}=(m_{j,1},m_{j,2},\cdots,m_{j,n})$, $m_{k}=(m_{k,1},m_{k,2},\cdots,m_{k,n})$,
$|m_{j}|=|m_{j,1}+m_{j,2}+\cdots+m_{j,n}|,$
$|m_{k}|=|m_{k,1}+m_{k,2}+\cdots+m_{k,n}|$.

On this complex manifold $M$, Calabi defined two new K\"ahler metrics
whose K\"ahler potentials are  $e^{D_{o}}-1$, $1-e^{-D_{o}}$ respectively.
 Their power series can be written as follows.
\begin{equation}\label{equ:D bc}
e^{D_{o}}-1= \sum_{j,k\geq0} b_{jk}(g) z^{m_{j}} \overline{z}^{m_{k}}, \quad
 1-e^{-D_{o}}= \sum_{j,k\geq0} c_{jk}(g)z^{m_{j}} \overline{z}^{m_{k}}.
 \end{equation}

Let us introduce two definitions before giving Calabi's criterion.
\begin{definition}[Calabi  \cite{Calabi1953}]
  A K\"ahler immersion $f$ of $(M, g)$ into $\mathbb{C}^{N}$ (resp. $\mathbb{CP}^{N}$ or $\mathbb{CH}^{N}$)
   is said to be full if $f(M)$ can not be contained in any complex totally geodesic
   hypersurface of $\mathbb{C}^{N}$ (resp. $\mathbb{CP}^{N}$ or $\mathbb{CH}^{N}$).
\end{definition}

\begin{definition}[Calabi  \cite{Calabi1953}]
 The K\"ahler metric $g$ on a complex manifold $M$ is resolvable (resp.  $1$-resolvable or $-1$-resolvable) of
rank $N$ at $p$ if the $\infty\times\infty$ matrix $a_{jk}(g)$ (resp. $b_{jk}(g)$ or $c_{jk}(g)$) given by formula
\eqref{equ:D a}(resp. \eqref{equ:D bc}) is positive semidefinite and of
rank $N$,  $N\leq\infty$.
\end{definition}

In \cite{Di Scala2007}, Di Scala and Loi reorganized Calabi's results as follows.
\begin{theorem} [Calabi's criterion]
 Let $M$ be a complex manifold endowed with a real analytic K\"ahler
metric $g$.
\begin{description}
  \item[(i)] If g is resolvable (resp. $1$-resolvable or $-1$-resolvable) of rank $N$ at $p \in M$ then it is
resolvable (resp. $1$-resolvable or $-1$-resolvable) of rank $N$ at every point in $M$.
  \item[(ii)] A neighborhood of a point $p$ admits a (full) K\"ahler immersion into $\mathbb{C}^{N}$ (resp. $\mathbb{CP}^{N}$ or $\mathbb{CH}^{N}$) if and only if $g$ is resolvable (resp. $1$-resolvable or $-1$-resolvable) of rank
at most (exactly) $N$ at $p$.
  \item[(iii)]  Two full K\"ahler immersions into $\mathbb{C}^{N}$ (resp. $\mathbb{CP}^{N}$ or $\mathbb{CH}^{N}$) are congruent under the
isometry group of $\mathbb{C}^{N}$ (resp. $\mathbb{CP}^{N}$ or $\mathbb{CH}^{N}$).
\end{description}
\end{theorem}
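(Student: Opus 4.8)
The plan is to derive all three parts from a single structural fact, the \emph{hereditary property} of Calabi's diastasis: if $f\colon(M,g)\to(\tilde M,\tilde g)$ is a K\"ahler immersion, then the diastatic functions are related by $D^{M}(p,q)=D^{\tilde M}(f(p),f(q))$. I would establish this first. Since $f$ is an isometric holomorphic map, it pulls back a local K\"ahler potential of $\tilde g$ to a local K\"ahler potential of $g$, up to an additive pluriharmonic term. The diastasis defined in \eqref{equ:diastatic} is built precisely to annihilate such pluriharmonic ambiguities, so $D^{M}=f^{*}D^{\tilde M}$ follows directly from its potential-independence, which is already recorded in the excerpt.

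Next I would compute the centered diastasis $D_o$ of the three model spaces from the potentials listed in Section \ref{sec:3}. Centering at the point mapped to the origin, one finds $D_o(z)=\|z\|^{2}$ for $(\mathbb C^{N},g_0)$, $D_o(z)=\log(1+\|z\|^{2})$ for $\mathbb{CP}^{N}$, and $D_o(z)=-\log(1-\|z\|^{2})$ for $\mathbb{CH}^{N}$. Substituting these into the hereditary property, a K\"ahler immersion $f=(f_1,\dots,f_N)$ with $f(p)=o$ forces, respectively,
\begin{equation*}
D_o(z)=\sum_{s}|f_s(z)|^{2},\quad e^{D_o(z)}-1=\sum_{s}|f_s(z)|^{2},\quad 1-e^{-D_o(z)}=\sum_{s}|f_s(z)|^{2}.
\end{equation*}
Expanding $f_s(z)=\sum_j f_{s,j}z^{m_j}$ and matching coefficients against \eqref{equ:D a} and \eqref{equ:D bc} gives $a_{jk}=\sum_s f_{s,j}\overline{f_{s,k}}$ (and likewise $b_{jk}$, $c_{jk}$). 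In matrix form the relevant coefficient matrix equals $F^{*}F$ with $F=(f_{s,j})$, which is exactly the statement that it is positive semidefinite of rank $\le N$, with equality iff $f$ is full. For the converse I would diagonalize the positive semidefinite rank-$N$ matrix to recover the $f_{s,j}$, set $f_s(z)=\sum_j f_{s,j}z^{m_j}$, and use real-analyticity of $D_o$ to supply the coefficient bounds guaranteeing that these series converge near $p$ and define an honest isometric immersion. This yields (ii).

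For (i), the rank is an intrinsic invariant: it is the dimension of the complex span of the component functions $\{f_s\}$ of any local resolution, and this span is independent of the chosen center. Because $g$ is real-analytic on the connected manifold $M$, a resolution near $p$ continues analytically to a resolution near any $q$ with the same number of independent components, so the rank at $q$ cannot exceed the rank at $p$; symmetry forces equality. For (iii), suppose $f,\tilde f$ are two full immersions into the same model with equal pullback metric. Polarizing the identity $\sum_s f_s(z)\overline{f_s(z)}=\sum_s \tilde f_s(z)\overline{\tilde f_s(z)}$ (and its $e^{D_o}-1$, $1-e^{-D_o}$ analogues) yields equal Hermitian Gram kernels $\langle F(z),F(w)\rangle=\langle\tilde F(z),\tilde F(w)\rangle$. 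A standard rigidity lemma then produces a unitary $U$ with $\tilde F=UF$: choosing $z_1,\dots,z_N$ at which $F(z_i)$ is a basis (possible by fullness), one defines $U$ on this basis, checks it is an isometry from the Gram equality, and extends by the kernel identity. This $U$ realizes the required element of the isometry group ($U(N)$, $PU(N+1)$, or $PU(1,N)$), giving congruence.

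The main obstacle is twofold. First, although the hereditary property is conceptually clean, one must verify carefully that $f^{*}$ of the ambient potential differs from a potential of $g$ only by a pluriharmonic function and that centering is compatible with the normalization $f(p)=o$; this is where potential-independence of the diastasis does the real work. Second, the analytic reconstruction in the converse of (ii) is delicate when $N=\infty$: I would need to show the factorized series converge and that the resulting $\ell^2$-valued map is a genuine immersion into the Hilbert-space model. The hyperbolic case additionally requires handling the \emph{indefinite} inner product on $\mathbb C^{1,N}$ when proving congruence, and the projective case requires normalizing away the overall scaling so that the rigidity lands in $PU(N+1)$ rather than $U(N+1)$.
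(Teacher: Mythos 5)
This statement is quoted background in the paper: it is Di Scala and Loi's reorganization (\cite{Di Scala2007}) of Calabi's results from \cite{Calabi1953}, and the paper gives no proof of it, so there is no internal argument to compare against. Your sketch is, in substance, Calabi's original proof: the hereditary property of the diastasis under K\"ahler immersions, the centered diastases $\|z\|^{2}$, $\log(1+\|z\|^{2})$, $-\log(1-\|z\|^{2})$ of the three models, the Gram factorization $a_{jk}=\sum_{s}f_{s,j}\overline{f_{s,k}}$ (and its $b_{jk}$, $c_{jk}$ analogues) identifying positive semidefiniteness and rank with existence and fullness of the immersion, and the kernel-rigidity argument producing the unitary (resp.\ projective-unitary, pseudo-unitary) congruence for (iii). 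So the route is the classical one, correctly reassembled.

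Two places where the sketch is looser than the source proof deserve mention. In (i), the component functions $f_{s}$ are only locally defined near $p$ and need not admit analytic continuation across all of $M$ (continuation along paths can meet obstructions), so "a resolution near $p$ continues analytically to a resolution near any $q$" is not available as stated; the correct mechanism, which your final sentence gestures at, is local constancy of the rank --- the locally defined immersion near $p$, recentered at any nearby $q$, exhibits resolvability at $q$ of rank at most $N$, symmetry gives equality, and connectedness of $M$ propagates the rank globally. In the converse direction of (ii) you correctly flag, but do not carry out, the convergence of the factorized series when $N=\infty$ (one needs the absolute convergence of the diastasis expansion on a polydisc to dominate $\sum_{s}|f_{s}(z)|^{2}$ and conclude each $f_{s}$ converges there, giving a genuine $\ell^{2}$-valued holomorphic map), and the signature bookkeeping in the hyperbolic case; these are precisely the analytic points \cite{Calabi1953} is devoted to, so your proposal should be read as a correct outline whose remaining work coincides with the content of that reference rather than with anything proved in the present paper.
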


\begin{theorem}[Calabi's criterion]\label{Calabi's criterion}
 Let $(M, g)$ be a simply connected K\"ahler manifold. If a neighborhood
of a point $p \in M$  can be K\"ahler immersed into a complex space form $(S, G)$ then the whole
$(M, g)$ admits a K\"ahler immersion into $(S, G)$.
\end{theorem}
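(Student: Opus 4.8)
The plan is to reduce the global assertion to the pointwise resolvability criterion recorded in the preceding Calabi criterion, and then to pass from local immersions to a single global one by analytic continuation, using the simple connectedness of $M$ to kill the monodromy. First I would note that a complex space form $(S,G)$ is one of $(\mathbb{C}^N, g_0)$, $\mathbb{CP}^N$ or $\mathbb{CH}^N$, so that the hypothesis ``a neighborhood of $p$ admits a K\"ahler immersion into $(S,G)$'' is exactly the setting of part (ii) of the previous theorem. Consequently $g$ is resolvable (resp. $1$-resolvable or $-1$-resolvable) of some rank $N$ at $p$, a property read off the diastatic expansion at $p$ through the matrices $a_{jk}(g)$ (resp. $b_{jk}(g)$ or $c_{jk}(g)$). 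By part (i) of that criterion the same resolvability, with the \emph{same} rank $N$, holds at every point of $M$, and applying the ``if'' direction of part (ii) at each point yields, for every $q \in M$, a neighborhood $U_q$ carrying a \emph{full} K\"ahler immersion $f_q$ into the space form $S_0$ of the same type and of dimension $N$, which sits totally geodesically inside $(S,G)$.

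Next I would assemble these local pieces by analytic continuation. Since $g$ is real analytic, each $f_q$ is a holomorphic map and is rigidly pinned down by its germ on any open subset. Fixing a path $\gamma$ from $p$ to an arbitrary point $q$, I would cover $\gamma$ by finitely many neighborhoods $U_0=U_p, U_1, \ldots, U_k = U_q$ with nonempty consecutive overlaps. On each overlap $U_i \cap U_{i+1}$ the restrictions of $f_i$ and $f_{i+1}$ are full K\"ahler immersions of the same connected piece into $S_0$, so by part (iii) they are congruent under the isometry group of $S_0$; replacing $f_{i+1}$ by $\rho_i \circ f_{i+1}$ for a suitable ambient isometry $\rho_i$, I may arrange $f_i = f_{i+1}$ on the overlap. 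This produces an analytic continuation of the germ of $f_p$ along $\gamma$, and by the identity principle for holomorphic maps this continuation depends only on $\gamma$ and not on the chosen cover.

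Finally I would invoke the monodromy theorem. The previous step shows the germ of $f_p$ can be continued along \emph{every} path in $M$; since $M$ is simply connected, the continuation is independent of the path and therefore defines a single-valued holomorphic map $f \colon M \to S_0 \subset S$. As $f$ agrees locally with the isometric immersions $f_q$, it is itself a K\"ahler immersion, which gives the desired immersion of $(M,g)$ into $(S,G)$.

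The hard part will be this last globalization, and within it two points deserve care. The first is that continuation is possible along every path: this rests on having a full local immersion at each point, furnished by the pointwise resolvability of part (i), together with the rigidity of part (iii) used to align successive germs. The second is that the continued germ is genuinely single-valued rather than merely well defined up to an ambient isometry, and this is precisely where simple connectedness enters through the monodromy theorem. A secondary technical point, already folded in above, is the reduction to full immersions: if the immersion near $p$ fails to be full in $(S,G)$, one first restricts to the smallest totally geodesic subspace form $S_0$ of dimension $N$ containing its image, runs the argument there, and only at the end composes with the totally geodesic inclusion $S_0 \hookrightarrow S$.
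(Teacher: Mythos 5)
The paper does not prove this statement at all: it is quoted verbatim (via Di Scala--Loi) from Calabi's 1953 paper, so there is no in-paper proof to compare against. Your argument is correct and is essentially Calabi's original one: constancy of the resolvability rank (part (i)) gives a full local K\"ahler immersion into the same $N$-dimensional totally geodesic $S_0 \subset S$ near every point, the rigidity statement (part (iii)) lets you align consecutive germs along a path by ambient isometries, and the monodromy theorem plus simple connectedness yields a single-valued global immersion. Two small points you glossed over but which go through: the restriction of $f_i$ to a connected piece of an overlap is still \emph{full} precisely because the resolvability rank is constant equal to $N$ (so rigidity genuinely applies there), and the congruence in Calabi's rigidity theorem is by a \emph{holomorphic} isometry of $S_0$, which is what keeps the patched map holomorphic.
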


\ms
Besides Calabi's criterion, we also need the follow theorems about
the relations between the K\"ahler submanifolds of complex space forms.

\begin{lemma}[Umehara \cite{Umehara1987}]\label{lem:Umehara}
If a K\"ahler manifold $(M, g)$  admits a K\"ahler immersion into $\mathbb{C}^{N}$, $N<\infty$, then
it can not be K\"ahler immersed into any finite dimensional complex hyperbolic
space or complex projective space.
If it can be K\"ahler immersed into $(\mathbb{CH}^{N}, g_{hyp}), N<\infty$, then it can not be into any
finite dimensional complex projective space.
\end{lemma}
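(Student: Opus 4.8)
The plan is to translate all three immersion questions into Calabi's criterion through the coefficient matrices $a_{jk}(g)$, $b_{jk}(g)$ and $c_{jk}(g)$ attached by \eqref{equ:D a} and \eqref{equ:D bc} to the power series of $D_{o}$, $e^{D_{o}}-1$ and $1-e^{-D_{o}}$. Calabi's criterion identifies a local K\"ahler immersion of $(M,g)$ into $\mathbb{C}^{N}$ (resp. $\mathbb{CP}^{N}$, $\mathbb{CH}^{N}$) with the possibility of writing $D_{o}$ (resp. $e^{D_{o}}-1$, $1-e^{-D_{o}}$) as a convergent sum $\sum_{\ell}|f_{\ell}|^{2}$ of squared moduli of holomorphic functions vanishing at $o$, the rank $N$ being the number of linearly independent $f_{\ell}$. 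Since a global immersion restricts to a local one, to prove the lemma it suffices to show that finiteness of one such representation forces either infinite rank or outright non-positivity of the others, so that no finite-dimensional target of the other type can occur.

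Suppose first that $(M,g)$ immerses into $\mathbb{C}^{N}$ with $N<\infty$, so $D_{o}=\sum_{\ell=1}^{N}|f_{\ell}|^{2}$ is a finite sum with at least one $f_{\ell}$ nonconstant. For the projective statement I expand, by the multinomial theorem,
\[
e^{D_{o}}-1=\sum_{n\geq 1}\frac{D_{o}^{\,n}}{n!}=\sum_{\alpha\neq 0}\frac{1}{\alpha!}\,|f^{\alpha}|^{2},\qquad f^{\alpha}:=\prod_{\ell=1}^{N}f_{\ell}^{\,\alpha_{\ell}}.
\]
The summands include every power $f_{1}^{\,n}$, hence span an infinite-dimensional space, so $b_{jk}(g)$ is positive semidefinite of infinite rank and $M$ admits no immersion into any finite-dimensional $\mathbb{CP}^{N'}$. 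For the hyperbolic statement the mechanism is opposite: I must show $1-e^{-D_{o}}$ is \emph{not} positive semidefinite. In terms of the polarised kernel $K(p,q)=\sum_{\ell}f_{\ell}(p)\overline{f_{\ell}(q)}$ one has $1-e^{-K}=K-\tfrac12 K^{\circ 2}+\tfrac16 K^{\circ 3}-\cdots$, whose alternating Hadamard powers obstruct positivity, and the task is to exhibit a finite principal submatrix of $c_{jk}(g)$ on which the Hermitian form is indefinite.

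Suppose next that $(M,g)$ immerses into $\mathbb{CH}^{N}$ with $N<\infty$, so $1-e^{-D_{o}}=\sum_{\ell=1}^{N}|f_{\ell}|^{2}$, equivalently $e^{D_{o}}=\bigl(1-\sum_{\ell}|f_{\ell}|^{2}\bigr)^{-1}$. The geometric series then gives
\[
e^{D_{o}}-1=\sum_{n\geq 1}\Bigl(\sum_{\ell}|f_{\ell}|^{2}\Bigr)^{n}=\sum_{\alpha\neq 0}\binom{|\alpha|}{\alpha}\,|f^{\alpha}|^{2},
\]
a sum with strictly positive coefficients again containing every $f_{1}^{\,n}$. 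Hence $b_{jk}(g)$ is positive semidefinite of infinite rank, so $(M,g)$ immerses into $\mathbb{CP}^{\infty}$ but into no finite-dimensional $\mathbb{CP}^{N'}$, which is the second assertion.

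The main obstacle is the hyperbolic case of the first part, the only step where the sign pattern is decisive; the two positive directions reduce, via the multinomial and geometric expansions, to an immediate infinite-rank count. I would reduce it to one complex variable by restricting the immersion to a holomorphic disk through $o$, on which $D_{o}$ stays a finite sum $\sum_{\ell}|h_{\ell}|^{2}$ of squares of one-variable holomorphic functions. The obstruction is already transparent in the model case $D_{o}=|s|^{2}$, where
\[
1-e^{-|s|^{2}}=\sum_{n\geq 1}\frac{(-1)^{n-1}}{n!}\,|s^{n}|^{2}
\]
carries the manifestly negative coefficient $-\tfrac12$ at $|s^{2}|^{2}$. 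In the kernel formulation one chooses image vectors $v_{p}=F(p)$ and $v_{q}\approx t\,v_{p}$ that are nearly parallel and evaluates the $2\times 2$ minor $(1-e^{-a})(1-e^{-b})-|1-e^{-c}|^{2}$ of $1-e^{-K}$, with $a=\|v_{p}\|^{2}$, $b=\|v_{q}\|^{2}$, $c=\langle v_{p},v_{q}\rangle$; for a linear image this minor equals $-\tfrac12 t^{2}a^{3}(1-t)^{2}<0$. The genuine difficulty, and the heart of Umehara's argument, is that for a \emph{curved} image the positive Cauchy--Schwarz defect $ab-|c|^{2}$ enters at the same order $|s|^{6}$, so one must verify that the exponential term still dominates for every nonconstant finite sum $D_{o}=\sum|f_{\ell}|^{2}$.
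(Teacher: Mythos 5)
Your proposal should first be measured against the right benchmark: the paper itself gives no proof of this lemma; it is imported verbatim from Umehara \cite{Umehara1987}, so the comparison is with Umehara's argument rather than with anything in this text. Your two ``positive'' cases are correct and essentially standard. When $D_{o}=\sum_{\ell=1}^{N}|f_{\ell}|^{2}$ with $N<\infty$, the multinomial expansion $e^{D_{o}}-1=\sum_{\alpha\neq0}\frac{1}{\alpha!}|f^{\alpha}|^{2}$ is a positive-coefficient representation containing all powers $f_{1}^{\,n}$ of a nonconstant $f_{1}$ (nonconstancy holds as soon as $\dim M\geq1$, since the immersion is injective on tangent spaces), and in any representation $e^{D_{o}}-1=\sum_{m}c_{m}|u_{m}|^{2}$ with $c_{m}>0$ the rank of the matrix $b_{jk}(g)$ of \eqref{equ:D bc} equals $\dim\mathrm{span}\{u_{m}\}$; hence $g$ is $1$-resolvable of infinite rank and Calabi's criterion excludes every finite-dimensional projective target. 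The same count applied to the geometric series $e^{D_{o}}-1=\sum_{\alpha\neq0}\binom{|\alpha|}{\alpha}|f^{\alpha}|^{2}$ when $1-e^{-D_{o}}$ is a finite sum of squares settles the second assertion. Both steps are sound, modulo the two routine remarks just made, which you should state explicitly.

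The genuine gap is the remaining case, ``immersion into $\mathbb{C}^{N}$, $N<\infty$, excludes every finite-dimensional $\mathbb{CH}^{N'}$'', and you concede as much: you verify only the model $D_{o}=|s|^{2}$ and a $2\times2$ minor for a \emph{linear} image, and you note yourself that for a curved image the Cauchy--Schwarz defect enters at the same order as the exponential correction, so that ``one must verify that the exponential term still dominates''---but that verification \emph{is} the theorem, not a detail to be deferred. Moreover your stated target, that the matrix $c_{jk}(g)$ fails to be positive semidefinite at all, is strictly stronger than the lemma: it would also exclude $\mathbb{CH}^{\infty}$, a case Umehara does not claim and which this very paper leaves undecided (the ``$-$'' entry in row A of Table \ref{tab:1}); this is a strong hint that a soft order-by-order or minor computation cannot close the argument. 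The missing idea is to use finite dimensionality of \emph{both} targets through polarization and linear independence of exponentials. Normalize so that $1,f_{1},\dots,f_{N}$ are linearly independent; if also $1-e^{-D_{o}}=\sum_{j=1}^{M}|h_{j}|^{2}$, polarizing gives $e^{-\sum_{i}f_{i}(z)\overline{f_{i}(w)}}=1-\sum_{j=1}^{M}h_{j}(z)\overline{h_{j}(w)}$ on $U\times\overline{U}$. Freezing $w$ shows each function $z\mapsto e^{-\sum_{i}\overline{f_{i}(w)}f_{i}(z)}$ lies in the fixed $(M+1)$-dimensional span of $1,h_{1},\dots,h_{M}$; but exponentials $e^{\ell_{w}}$ of holomorphic functions with $\ell_{w}-\ell_{w'}$ nonconstant for $w\neq w'$ (achievable for infinitely many $w$ precisely because the $f_{i}$ are independent modulo constants) are linearly independent, so these functions span an infinite-dimensional space---a contradiction. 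Your $2\times2$ minor heuristic cannot substitute for this rank mechanism, so as written the proposal proves two of the three claims and leaves the hardest one open.
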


\begin{lemma}[Di Scala, Ishi, Loi \cite{Di Scala2012}]\label{lem:Di2012}
If a K\"ahler manifold $(M, g)$  admits a K\"ahler immersion into $(\mathbb{CH}^{N}, g_{hyp})$, $N\leq \infty$,
then it also can be K\"ahler immersed into $(\mathbb{C}^{\infty}, g_{0})$.
\end{lemma}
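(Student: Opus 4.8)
The plan is to reduce everything to Calabi's resolvability criterion and then exploit the algebraic identity relating the $-1$-resolvability data of the hyperbolic immersion to the resolvability data of the desired Euclidean immersion. Writing $D_o$ for the diastasis of $(M,g)$ centered at $p$, the hypothesis that $(M,g)$ admits a K\"ahler immersion into $(\mathbb{CH}^N, g_{hyp})$ means, by Calabi's criterion, that $g$ is $-1$-resolvable at $p$; that is, the coefficient matrix $c_{jk}(g)$ associated to the expansion of $1-e^{-D_o}$ in \eqref{equ:D bc} is positive semidefinite. Dually, to produce a K\"ahler immersion into $(\mathbb{C}^\infty, g_0)$ it suffices to show that $g$ is resolvable, i.e. that the coefficient matrix $a_{jk}(g)$ attached to $D_o$ itself in \eqref{equ:D a} is positive semidefinite (its rank may be infinite, which is exactly why the target must be allowed to be $\mathbb{C}^\infty$). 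So the whole statement becomes the implication: $1-e^{-D_o}$ positive semidefinite $\Rightarrow$ $D_o$ positive semidefinite.

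First I would set $u = 1 - e^{-D_o}$ and record the identity $D_o = -\log(1-u) = \sum_{n\ge1} \frac{u^n}{n}$, which converges in a neighborhood of $p$ since $u(p)=0$ and $D_o$ is real analytic there, so $|u|<1$ near $p$. The point is that positive semidefiniteness of the coefficient matrix of a function $\Psi(z,\overline{w}) = \sum \Psi_{jk}\, z^{m_j}\overline{w}^{m_k}$ is equivalent to $\Psi$ being a positive semidefinite kernel, i.e. $\Psi(z,\overline{w}) = \sum_\ell \phi_\ell(z)\overline{\phi_\ell(w)}$ for holomorphic $\phi_\ell$; this is precisely the Gram-matrix description underlying Calabi's immersions. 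Granting this equivalence, the hypothesis gives $u(z,\overline{w}) = \sum_m \psi_m(z)\overline{\psi_m(w)}$.

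The key step is that the class of positive semidefinite kernels is closed under products and under nonnegative linear combinations. Closure under products is the Schur product theorem: if $u = \sum_m \psi_m\overline{\psi_m}$ and $v = \sum_{m'} \chi_{m'}\overline{\chi_{m'}}$, then $uv = \sum_{m,m'} (\psi_m\chi_{m'})\overline{(\psi_m\chi_{m'})}$ is again a sum of squared moduli. Hence every power $u^n$ is a positive semidefinite kernel, each $\frac1n u^n$ remains one, and therefore the convergent series $D_o = \sum_{n\ge1} \frac1n u^n$ is a positive semidefinite kernel as well. Concretely, the components of the resulting immersion into $\mathbb{C}^\infty$ are the functions $\frac{1}{\sqrt n}\,\psi_{m_1}\cdots\psi_{m_n}$ indexed by $n\ge1$ and tuples $(m_1,\dots,m_n)$. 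Thus $a_{jk}(g)$ is positive semidefinite, $g$ is resolvable of rank at most $\infty$, and Calabi's criterion yields the desired K\"ahler immersion of $(M,g)$ into $(\mathbb{C}^\infty, g_0)$.

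The main obstacle I expect is analytic bookkeeping rather than conceptual: one must justify that the series $\sum_{n\ge1} \frac1n u^n$ converges in the relevant coefficientwise topology and that positive semidefiniteness survives the limit (a limit of positive semidefinite matrices is positive semidefinite, so this is safe once convergence is in hand), and one must carefully match Calabi's coefficient-matrix formulation in \eqref{equ:D a}--\eqref{equ:D bc} with the reproducing-kernel formulation used in the Schur-product step. A point worth flagging is that finiteness of $N$ for the hyperbolic immersion need not be inherited: even $\mathbb{CH}^N$ itself immerses into $\mathbb{C}^\infty$ but into no finite-dimensional $\mathbb{C}^{N'}$, which is why the conclusion must be stated with target $\mathbb{C}^\infty$.
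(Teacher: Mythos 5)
Your proposal is correct in substance, but note that the paper never proves this statement: it is quoted verbatim from Di Scala--Ishi--Loi \cite{Di Scala2012}, so the only comparison available is with the argument in that reference, and yours is essentially it. The standard proof runs exactly through your two ingredients: Calabi's criterion converts the hyperbolic immersion into $-1$-resolvability, i.e.\ positive semidefiniteness of the kernel $u=1-e^{-D_o}$, and then the identity $D_o=-\log(1-u)=\sum_{n\geq 1}u^n/n$, whose Taylor coefficients are nonnegative, transfers positivity to the coefficient matrix $a_{jk}(g)$ of $D_o$ via closure of positive semidefinite kernels under Schur products and coefficientwise-convergent nonnegative sums. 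Your remarks on the analytic bookkeeping (convergence of the polarized series near the diagonal, positivity surviving entrywise limits) and on why the rank must be allowed to be infinite are all accurate, and the last point is consistent with Lemma \ref{lem:Umehara}.

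One step you should tighten: your closing appeal to Calabi's globalization theorem (Theorem \ref{Calabi's criterion} in the paper) requires $M$ simply connected, a hypothesis the lemma does not impose. The gap is easily closed by your own explicit formula. If $f\colon M\to\mathbb{CH}^N$ is the given global immersion, compose with an automorphism of $\mathbb{CH}^N$ sending $f(p)$ to the origin; by the hereditary property of the diastasis, the polarized kernel satisfies $u(z,\overline{w})=\sum_m \psi_m(z)\overline{\psi_m(w)}$ with $\psi_m$ the \emph{globally defined} components of this normalized immersion, so the family $\frac{1}{\sqrt{n}}\,\psi_{m_1}\cdots\psi_{m_n}$ is a globally defined $\ell^2$-valued holomorphic map with the right potential, and no monodromy issue arises. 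Equivalently, one may simply compose $f$ with the explicit Calabi-type immersion $\mathbb{CH}^N\hookrightarrow\ell^2$ whose components are $\sqrt{\frac{(n-1)!}{\alpha!}}\,z^{\alpha}$ for $|\alpha|=n\geq 1$ (the image lies in $\ell^2$ since $\sum_{n\geq 1}\frac{1}{n}\|z\|^{2n}=-\log\bigl(1-\|z\|^{2}\bigr)<\infty$ on the ball). In the applications in this paper $\Omega$ is assumed simply connected, so the lapse is harmless here, but as a proof of the lemma in the stated generality it needs this repair.
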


\begin{lemma}[Zedda \cite{Zedda2009}]\label{lem:zedda2009}
A K\"ahler manifold $(M, hg^{M})$ admits a local K\"ahler immersion into $(\mathbb{CP}^{\infty}, g_{FS})$ for all $h>0$ if
 and only if $(M, g^{M})$ admits a local K\"ahler immersion into  $(\mathbb{C}^{\infty}, g_{0})$.
\end{lemma}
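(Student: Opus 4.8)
The plan is to translate both immersion statements into positive-definiteness conditions on the diastatic function via Calabi's criterion, and then to connect the two through the elementary expansion $e^{hD_o}-1=\sum_{n\ge1}\frac{h^n}{n!}D_o^n$. The pivotal observation is how the diastatic function scales under $g^{M}\mapsto hg^{M}$.

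First I would record this scaling. Replacing $g^{M}$ by $hg^{M}$ multiplies the K\"ahler form, and hence every local K\"ahler potential, by $h$; since the diastatic function \eqref{equ:diastatic} depends linearly on the potential, the diastatic function of $(M,hg^{M})$ centered at the origin is exactly $hD_o$, where $D_o$ is the diastatic function of $(M,g^{M})$. By Calabi's criterion, $(M,g^{M})$ admits a local K\"ahler immersion into $(\mathbb{C}^{\infty},g_0)$ if and only if $g^{M}$ is resolvable, i.e. the coefficient matrix $\big(a_{jk}(g^{M})\big)$ of $D_o$ in \eqref{equ:D a} is positive semidefinite; equivalently, the power-series kernel $D_o(z,\overline w)$ is a positive definite kernel. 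Likewise $(M,hg^{M})$ admits a local K\"ahler immersion into $(\mathbb{CP}^{\infty},g_{FS})$ if and only if $hg^{M}$ is $1$-resolvable, i.e. the coefficient matrix $\big(b_{jk}(hg^{M})\big)$ of $e^{hD_o}-1$ is positive semidefinite, equivalently $e^{hD_o}-1$ is a positive definite kernel. Throughout I use the standard equivalence that a monomial power-series kernel has positive semidefinite coefficient matrix precisely when it is positive definite as a reproducing kernel.

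For the direction ($\Leftarrow$), assume $D_o$ is positive definite. Expanding $e^{hD_o}-1=\sum_{n\ge1}\frac{h^n}{n!}D_o^n$, the Schur product theorem (applied to the finite evaluation matrices of the kernel) shows each power $D_o^n$ is again a positive definite kernel, and a convergent sum of positive definite kernels with positive coefficients is positive definite. Hence $e^{hD_o}-1$ is positive definite for every $h>0$, which is exactly the immersibility of $(M,hg^{M})$ into $\mathbb{CP}^{\infty}$. For the converse ($\Rightarrow$), assume $e^{hD_o}-1$ is positive definite for all $h>0$. Dividing by $h>0$ preserves positive definiteness, so $\tfrac1h(e^{hD_o}-1)=D_o+\tfrac{h}{2}D_o^2+\cdots$ is a positive definite kernel for each $h$, and letting $h\to0^{+}$ this converges coefficientwise to $D_o$. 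Since positive semidefiniteness of the coefficient matrices is preserved under entrywise limits, $D_o$ is positive definite, yielding the immersion of $(M,g^{M})$ into $\mathbb{C}^{\infty}$.

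The main obstacle is making these kernel-level manipulations rigorous in the infinite-dimensional Calabi setting: identifying \emph{coefficient matrix $(a_{jk})$ positive semidefinite} with \emph{$D_o$ a positive definite kernel}, extending the Schur product theorem to the $\infty\times\infty$ coefficient matrices produced by multiplying power series, and justifying the interchange of the limit $h\to0^{+}$ with the infinite expansion. Each step is standard once phrased in terms of reproducing kernels, but the bookkeeping with the multi-index ordering $m_0,m_1,\dots$ and the convergence of $\sum_{n}\tfrac{h^n}{n!}D_o^n$ on a neighbourhood of the origin is where the care is required.
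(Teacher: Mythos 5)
Your proposal cannot be compared against a proof in the paper, because the paper offers none: Lemma \ref{lem:zedda2009} is quoted from Zedda's thesis \cite{Zedda2009} and used as a black box. Judged on its own, your argument is correct and is essentially the standard proof of this equivalence. The scaling observation (the diastasis of $hg^{M}$ is $hD_{o}$, by linearity of \eqref{equ:diastatic} in the potential), the reduction via Calabi's criterion to positivity of the coefficient matrices in \eqref{equ:D a} and \eqref{equ:D bc}, the expansion $e^{hD_{o}}-1=\sum_{n\geq 1}\frac{h^{n}}{n!}D_{o}^{n}$ with a Schur-product argument for ($\Leftarrow$), and the limit $\frac{1}{h}\bigl(e^{hD_{o}}-1\bigr)\to D_{o}$ as $h\to 0^{+}$ for ($\Rightarrow$) are exactly the ingredients of the known proof.

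Two of the technical points you flag deserve a word, and both resolve cleanly. First, the $h\to 0^{+}$ interchange is in fact trivial at the coefficient level: Calabi's normalization of the diastasis gives $a_{j0}=a_{0k}=0$ for all $j,k$, so $D_{o}^{n}$ contributes only to bidegrees $(m_{j},m_{k})$ with $|m_{j}|\geq n$ and $|m_{k}|\geq n$; hence each fixed entry of the coefficient matrix of $\frac{1}{h}\bigl(e^{hD_{o}}-1\bigr)$ is a \emph{polynomial} in $h$ with constant term $a_{jk}$, and positive semidefiniteness passes to entrywise limits through finite principal minors. No rearrangement or uniformity issue arises. Second, your ($\Leftarrow$) direction routes through the equivalence between positivity of the $\infty\times\infty$ coefficient matrix and positivity of the evaluation kernel; the nontrivial direction (kernel positive $\Rightarrow$ coefficients positive) needs the reproducing-kernel argument with derivative functionals at the origin. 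This is standard but avoidable: one can stay entirely at the coefficient level by taking a Gram factorization $a_{jk}=\langle v_{k},v_{j}\rangle$ of the positive semidefinite matrix and noting that the coefficient matrix of $D_{o}^{n}$ is the Gram matrix of the vectors $w_{j}=\sum v_{j_{1}}\otimes\cdots\otimes v_{j_{n}}$, the sum running over the finitely many decompositions $m_{j_{1}}+\cdots+m_{j_{n}}=m_{j}$, in the $n$-fold tensor product; this sidesteps the kernel--coefficient equivalence altogether. With either repair your proof is complete.
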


\subsection{The diastatic function of $(\Omega, g^{\Omega})$}

Let $\Omega$ be as in \eqref{equ:Omega}.
Let $g^{D}$ and $g^{\Omega}$ be K\"ahler metrics whose K\"ahler potentials are $-\log \varphi(z)$ and $-\log(\varphi(z)-||z_{0}||^{2})$
respectively. The following lemma shows a  relation  between their diastatic functions.

\begin{lemma}\label{lem:diastasis} Let $h$ be a positive number.
 If $-\log\varphi$ is the special K\"ahler potential that determined by the diastatic function of $g^{D}$, then
 $-h\log(\varphi(z)-||z_{0}||^{2})$  is the special K\"ahler potential that determined by the diastatic function  of $hg^{\Omega}$.
\end{lemma}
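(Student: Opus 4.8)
The plan is to check the assertion directly for $h=1$ and then recover general $h$ by the scaling behaviour of the diastasis. Since the expression \eqref{equ:diastatic} is linear in the K\"ahler potential, and since $-h\log(\varphi-\|z_0\|^2)$ is $h$ times the potential $-\log(\varphi-\|z_0\|^2)$ of $g^\Omega$, the diastasis of $hg^\Omega$ is simply $h$ times that of $g^\Omega$; thus it suffices to show that $-\log(\varphi(z)-\|z_0\|^2)$ is itself the diastasis $D^{g^\Omega}_{\tilde o}$ of $g^\Omega$ centered at the origin $\tilde o=(0,0)\in\Omega$.

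First I would extract the content of the hypothesis. For any K\"ahler potential $\Phi$, formula \eqref{equ:diastatic} with $p$ the origin gives $D_o(z,0)=\Phi(0,0)+\Phi(z,0)-\Phi(0,0)-\Phi(z,0)=0$, and likewise $D_o(0,\bar z)=0$; that is, the diastasis centered at the origin always has vanishing pure holomorphic and pure antiholomorphic parts. Hence the hypothesis that $-\log\varphi$ equals $D^{g^D}_o$ forces $-\log\varphi(z,0)\equiv 0$ and $-\log\varphi(0,\bar z)\equiv 0$ as power series, i.e. upon polarizing $\varphi$ near $0\in D$, one gets $\varphi(z,0)\equiv 1$, $\varphi(0,\bar z)\equiv 1$, and in particular $\varphi(0,0)=1$. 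This is the only way the hypothesis is used.

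Next I would polarize the candidate potential as $\Phi^\Omega\big((z_0,z),(w_0,w)\big)=-\log\big(\varphi(z,\bar w)-\langle z_0,\bar w_0\rangle\big)$, where $\langle z_0,\bar w_0\rangle=\sum_s z_{0s}\bar w_{0s}$ is the polarization of $\|z_0\|^2$. Evaluating \eqref{equ:diastatic} at $p=\tilde o$ and $q=(z_0,z)$, the corner term is $\Phi^\Omega(\tilde o,\tilde o)=-\log\varphi(0,0)=0$; the first mixed term is $-\log\big(\varphi(0,\bar z)-\langle 0,\bar z_0\rangle\big)=-\log\varphi(0,\bar z)=0$; and the second mixed term is $-\log\big(\varphi(z,0)-\langle z_0,0\rangle\big)=-\log\varphi(z,0)=0$, using the translated hypothesis together with the vanishing of the Hartogs term on each axis. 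Therefore $D^{g^\Omega}_{\tilde o}(z_0,z)=-\log(\varphi(z)-\|z_0\|^2)$, which is the claim for $h=1$, and the scaling remark completes the proof.

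The computation is short, so there is no real analytic obstacle; the single point that must be handled with care is the behaviour of the non-holomorphic Hartogs term $\|z_0\|^2$ under polarization. What makes the argument work is precisely that its polarization $\langle z_0,\bar w_0\rangle$ vanishes whenever either the holomorphic or the antiholomorphic fiber variable is set to zero, so that subtracting $\|z_0\|^2$ inside the logarithm introduces no new pure terms and the fiber directions inherit the diastatic normalization already present on the base. (Real-analyticity of $g^\Omega$ near $\tilde o$, which is needed for the diastasis to exist, follows from that of $\varphi$, implicit in the assumption that $g^D$ possesses a diastasis.)
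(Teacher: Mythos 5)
Your proposal is correct and follows essentially the same route as the paper: you polarize the potential $-\log\bigl(\varphi(z,\bar w)-\langle z_0,\bar w_0\rangle\bigr)$ and evaluate the diastasis formula at the origin, using the hypothesis on $\varphi$ exactly as the paper does via its identity $\Phi(0,0)-\Phi(0,\overline w)-\Phi(w,0)=0$ (you extract the slightly stronger separate vanishings $\varphi(z,0)\equiv\varphi(0,\bar z)\equiv 1$, which is equivalent here). Your reduction to $h=1$ by linearity of the diastasis in the potential is a cosmetic variant of the paper carrying $h$ through the same computation, and your remark that the polarized Hartogs term $\langle z_0,\bar w_0\rangle$ kills itself on the axes is precisely the mechanism at work in the paper's proof.
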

\begin{proof}
  Let $\Phi$ be the extension of $-\log \varphi$ on $U\times \overline{U}$, i.e. $\Phi(z,\overline{z})=-\log \varphi(z)$, then $$D(z,w)=\Phi(z,\overline{z})+\Phi(w,\overline{w})-\Phi(z,\overline{w})-\Phi(w,\overline{z}).$$
  Since $D(0,w)=-\log\varphi(w)$, we know
  $\Phi(0,0)-\Phi(0,\overline{w})-\Phi(w,0)=0.$
Let $\widetilde{\Phi}$ be the extension of $-h\log (\varphi(z)-||z_{0}||^{2})$ that satisfies
 $$\widetilde{\Phi}((z_{0},z),(\overline{z}_{0},\overline{z}))
 =-h\log (\varphi(z)-||z_{0}||^{2})
 =-h\log (e^{-\Phi(z,\overline{z})}-||z_{0}||^{2}),$$ then
  $\widetilde{\Phi}((z_{0},z),(\overline{w}_{0},\overline{w}))= -h\log (e^{-\Phi(z,\overline{w})}-z_{0}\overline{w}^{t}_{0})$.
  Hence,
  \begin{eqnarray*}
 &&D((0,0),(w_{0},w))\\
 &=&\widetilde{\Phi}((0,0),(0,0))+\widetilde{\Phi}((w_{0},w),
 (\overline{w}_{0},\overline{w}))-\widetilde{\Phi}((0,0),(\overline{w}_{0},\overline{w}))-\widetilde{\Phi}((w_{0},w),(0,0))\\
  &=&h\Phi(0,0)+\Phi((w_{0},w),(\overline{w}_{0},\overline{w}))-h\Phi(0,\overline{w})-h\Phi(w,0)\\
  &=&-h\log (\varphi(w)-||w_{0}||^{2}).
  \end{eqnarray*}
  We complete the proof.
\end{proof}

In the following sections, we will focus on the  case that
$\Omega$ contains the origin and  $\Omega$ is circular with the origin. It implies that
 $D$  is also  circular with the origin and $\varphi(e^{\sqrt{-1}\theta}z)=\varphi(z)$.
Conversely, if  $D$ is circular with the origin and $\varphi(e^{\sqrt{-1}\theta}z)=\varphi(z)$, then
$\Omega$ is circular with the origin.

\subsection{Immersion in complex Euclidean space}
Let $f: (M, g^{M})\rightarrow (\mathbb{C}^{N}, g_{0})$, $N\leq \infty$,  be a K\"ahler immersion,
then $\sqrt{h}f$, $h>0$, gives a K\"ahler immersion of $(M, hg^{M})$ into $(\mathbb{C}^{N}, g_{0})$.
There are no difference to prove one of them.

 \begin{theorem}\label{thm:CES}
Let $\Omega$ be  as in \eqref{equ:Omega}. Suppose that $\Omega$ is
a simply connected circular domain with center zero and
the function $-\log\varphi$ is the special K\"ahler potential determined by the diastatic function of  $g^{D}$ .
Then $(\Omega, g^{\Omega})$ admits a full K\"ahler immersion into $(\mathbb{C}^{\infty}, g_{0})$ if and only
if $(D, g^{D})$ admits a K\"ahler immersion into $(\mathbb{C}^{N}, g_{0})$,
$N\leq \infty$.
\end{theorem}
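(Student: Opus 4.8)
The plan is to characterize both immersions through Calabi's criterion and then compare the two diastatic functions. Recall that, by Calabi's criterion, a real analytic K\"ahler metric admits a (full) K\"ahler immersion into $(\mathbb{C}^N, g_0)$ exactly when its diastatic function centered at the origin can be written as a convergent sum of squares of holomorphic functions $\sum_\ell |F_\ell|^2$, the rank $N$ being the number of independent such functions (equivalently, the matrix $a_{jk}$ of \eqref{equ:D a} is positive semidefinite of rank $N$). By Lemma \ref{lem:diastasis} with $h=1$, the diastatic function of $g^{\Omega}$ centered at the origin is $D^{\Omega}_o(z_0,z)=-\log(\varphi(z)-||z_0||^2)$, while that of $g^{D}$ is $D^{D}_o(z)=-\log\varphi(z)$; note $D^{D}_o(0)=0$ forces $\varphi(0)=1$. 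The whole proof reduces to relating the sum-of-squares representations of these two functions.

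For the (easier) reverse direction, suppose $(\Omega, g^{\Omega})$ admits a full immersion into $(\mathbb{C}^\infty, g_0)$, so that $D^{\Omega}_o(z_0,z)=\sum_\ell |F_\ell(z_0,z)|^2$ with $F_\ell$ holomorphic. Restricting to the slice $z_0=0$ gives
\begin{equation*}
D^{D}_o(z)=-\log\varphi(z)=D^{\Omega}_o(0,z)=\sum_\ell |F_\ell(0,z)|^2,
\end{equation*}
a sum of squares of the holomorphic functions $z\mapsto F_\ell(0,z)$. Hence $g^{D}$ is resolvable and, by Calabi's criterion, admits a K\"ahler immersion into $(\mathbb{C}^N, g_0)$ for $N$ equal to its rank ($N\leq\infty$).

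For the forward direction assume $(D, g^{D})$ admits a K\"ahler immersion into some $(\mathbb{C}^N, g_0)$, so $D^{D}_o=\sum_j|s_j|^2$. Writing $\varphi=e^{-D^{D}_o}$ and expanding the logarithm, for $(z_0,z)$ near the origin (where $||z_0||^2 e^{D^{D}_o}=||z_0||^2/\varphi<1$) one has
\begin{equation*}
D^{\Omega}_o=-\log\varphi-\log\bigl(1-||z_0||^2 e^{D^{D}_o}\bigr)=D^{D}_o+\sum_{k=1}^{\infty}\frac{1}{k}\,||z_0||^{2k}\,e^{kD^{D}_o}.
\end{equation*}
Here $||z_0||^{2k}=\sum_{|\gamma|=k}\binom{k}{\gamma}|z_0^\gamma|^2$ is a sum of squares, and the key point is to control the factors $e^{kD^{D}_o}$. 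Since $g^{D}$ immerses into $\mathbb{C}^\infty$, Lemma \ref{lem:zedda2009} shows that $kg^{D}$ admits a local immersion into $(\mathbb{CP}^\infty, g_{FS})$ for every positive integer $k$; by the $1$-resolvability part of Calabi's criterion this means $e^{kD^{D}_o}-1=\sum_j|f_j^{(k)}|^2$ is a sum of squares, whence $e^{kD^{D}_o}=|1|^2+\sum_j|f_j^{(k)}|^2$ is as well. Using that a product of sums of squares is again a sum of squares (as $|p|^2|q|^2=|pq|^2$) and that a convergent series of sums of squares is a sum of squares, each term $\tfrac1k||z_0||^{2k}e^{kD^{D}_o}$, and therefore $D^{\Omega}_o$ itself, is a convergent sum of squares of holomorphic functions. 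Thus $g^{\Omega}$ is resolvable and admits a local immersion into $(\mathbb{C}^\infty, g_0)$; it is full because restricting to the fibre $z=0$ yields diastatic function $-\log(1-||z_0||^2)=\sum_{k\geq1}\tfrac1k||z_0||^{2k}$, whose sum-of-squares representation necessarily involves infinitely many independent holomorphic functions, forcing infinite rank. Finally, since $\Omega$ is simply connected, Theorem \ref{Calabi's criterion} upgrades this local immersion near the origin to a global full K\"ahler immersion of $(\Omega, g^{\Omega})$ into $(\mathbb{C}^\infty, g_0)$.

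The main obstacle is precisely the control of the factors $e^{kD^{D}_o}$ in the forward direction: a bare immersion of $g^{D}$ into $\mathbb{C}^\infty$ only exhibits $D^{D}_o$ as a sum of squares, which says nothing directly about its exponential. The bridge is Zedda's Lemma \ref{lem:zedda2009}, which converts Euclidean immersibility of $g^{D}$ into projective immersibility of all integer multiples $kg^{D}$, and the latter is exactly the statement that $e^{kD^{D}_o}-1$ is a sum of squares. Everything else is a convergent bookkeeping of sums of squares, valid near the origin where $||z_0||^2<\varphi$ guarantees convergence.
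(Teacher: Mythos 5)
Your proof is correct, and it rests on exactly the same engine as the paper's --- the identity $\varphi^{-\sigma}=e^{\sigma D^{D}_{o}}$ combined with Zedda's Lemma \ref{lem:zedda2009} to convert Euclidean immersibility of $g^{D}$ into $1$-resolvability of all the multiples $\sigma g^{D}$ --- but the packaging is genuinely different, and in three respects cleaner. First, where the paper works in Bochner coordinates with the infinite coefficient matrix \eqref{equ:matrix a}, invokes the circular symmetry of $\Omega$ to kill the off-diagonal blocks, and then checks positive semidefiniteness of each diagonal block $A_{z_{01}(\sigma)}(0)$ separately, you expand $-\log\bigl(1-\|z_0\|^2e^{D^{D}_{o}}\bigr)$ directly and exhibit the diastasis as a convergent sum of squares; your term $\tfrac1k\|z_0\|^{2k}e^{kD^{D}_{o}}$ is precisely the paper's $\sigma$-th block in disguise, but since you produce the Gram factorization explicitly you never need the circularity hypothesis for this theorem (only the hypothesis that $-\log\varphi$ is the diastatic potential, via Lemma \ref{lem:diastasis}), whereas the paper's block-diagonalization is where circularity enters. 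Second, the multinomial expansion of $\|z_0\|^{2k}$ handles arbitrary fiber dimension $d_0$ in one stroke, replacing the paper's induction realizing $\Omega_{k+1}$ as a Hartogs domain over $\Omega_k$ with one-dimensional fibers. Third, your fullness and converse arguments (restriction to the fiber $z=0$, where the diastasis $-\log(1-\|z_0\|^2)$ has infinite rank, and restriction to the slice $z_0=0$, which recovers $D^{D}_{o}$ as a sum of squares) are tidier than the paper's appeal to the entries \eqref{equ:Ai} and to ``leading minors in terms of (iii).'' The one point you gloss over is the equivalence between resolvability (positive semidefiniteness of $(a_{jk})$) and a locally convergent sum-of-squares representation, together with the legitimacy of rearranging the double series; both are standard --- all terms are nonnegative, so the rearranged Gram sums converge and yield a positive semidefinite coefficient matrix, and this equivalence is Calabi's own formulation --- but a careful write-up should say a sentence about it.
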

\begin{proof} Although $\Omega$ is a bounded pseudoconvex Hartogs domain with
$d_{0}$-dimensional fibers, it suffices to prove the case $d_{0}=1$.
In fact, suppose it is true for $d_{0}=1, \cdots, k$.
Define
\begin{eqnarray}
\Omega_{k}&=&\Big\{(z_{01},\cdots,z_{0k}, z)\in \mathbb{C}^{k}\times D : |z_{01}|^{2}+\cdots+|z_{0k}|^{2}<\varphi(z)\Big\}.
\end{eqnarray}
 The K\"ahler potential of $g^{\Omega_{k}}$ is $-\log (\varphi(z)-|z_{01}|^{2}-\cdots-|z_{0k}|^{2})$.
Let $(D, g^{D})$
 be a K\"ahler submanifold of $(\mathbb{C}^{N}, g_{0})$. By the assumption, $(\Omega_{k}, g^{\Omega_{k}})$
 is a full K\"ahler submanifold of $(\mathbb{C}^{\infty}, g_{0})$.
We now prove it is also true for $d_{0}=k+1$.
Note that $\Omega_{k+1}$ can be written by  the following equivalent form.
 \begin{eqnarray*}
\Omega_{k+1}=\Big\{(z_{0(k+1)},z_{01},\cdots,z_{0k}, z)\in \mathbb{C}\times \Omega_{k} : |z_{0(k+1)}|^{2}<\varphi(z)-|z_{01}|^{2}-\cdots-|z_{0k}|^{2}\Big\}.
\end{eqnarray*}
Hence, $\Omega_{k+1}$ is a bounded pseudoconvex domain over $\Omega_{k}$ with $1$-dimensional fibers.
By the assumption,  $(\Omega_{k+1}, g^{\Omega_{k+1}})$
is a full K\"ahler submanifold of $(\mathbb{C}^{\infty}, g_{0})$.
Conversely,
if
$(\Omega_{k+1}, g^{\Omega_{k+1}})$
is a full K\"ahler submanifold of $(\mathbb{C}^{\infty}, g_{0})$,
then $(\Omega_{k}, g^{\Omega_{k}})$
is a full K\"ahler submanifold of $(\mathbb{C}^{\infty}, g_{0})$.
Moreover,
$(D, g^{D})$
is a K\"ahler submanifold of $(\mathbb{C}^{N}, g_{0})$.
Thus we prove that the result is also true for
 any finite dimensional fibers by induction.

Now we assume that $d_{0}=1$. Let $o=(o_{0},o_{1})$ be the origin of $\mathbb{C}\times\mathbb{C}^{d}$, $\eta=(z_{01}, z)=(z_{01}, z_{1},\cdots,z_{d})$ be the coordinate of the point $q\in \Omega$.
Consider the domain $\Omega$ with K\"ahler metric $g^{\Omega}$ whose globally defined K\"ahler potential around
the origin $o$ is
 \begin{equation}\label{equ:Doa}
\displaystyle{D(o,q)=-\log \big(\varphi(z)-|z_{01}|^{2}\big).}
\end{equation}

Let $p_{1}\in D$ be the projection point of $p$,
then $z$ is the complex coordinate of $p_{1}$.
Let $D(o_{1},q_{1})=-\log\varphi$ be the diastatic function for $g^{D}$ around the origin.
By Lemma \ref{lem:diastasis}, we know
$D(o,q)$ is the diastatic function for $g^{\Omega}$ around the origin.
The power expansion
\begin{equation}\label{equ:power expansion a}
D(o,q)=\sum\limits_{j,k=0}^{\infty} a_{j,k}(\eta)^{m_{j}}
(\overline{\eta})^{m_{k}}.
\end{equation}

The property of the matrix of coefficients can be described as follows:
the elements of the row vector have the same multi-index $m_{j}$,
and $|m_{k}|$ increases with the increase of  column number;
the elements of the column vector have the same multi-index $m_{k}$, and $|m_{j}|$   increases with the increase of
row number; The matrix of coefficient can be given by the following  block matrix.
\begin{equation}\label{equ:matrix a}
(a_{j,k})=
\left(\begin{array}{cccc}
A_{0,0}&A_{0,1} & A_{0,0}& \dots   \\
A_{1,0}&A_{1,1}&A_{1,2}& \dots \\
A_{2,0}&A_{2,1}&A_{2,2}& \dots\\
\vdots & \vdots & \vdots & \ddots\\
\end{array}
\right),
\end{equation}
where the element of matrix $A_{s,t}(s,t\in \mathbb{N})$ satisfies that $|m_{j}|=s$ and $|m_{k}|=t$.

The following observations tell us that the coefficients which satisfy some conditions are zero.
This  can be proved as follows.
\begin{description}
  \item[(1)]
Take the transformation $\Psi(z_{01},z)=(e^{i\theta}z_{01},z)$,
  by the expression \eqref{equ:Doa} of $D(o,q)$, we know $D(o,q)$ is invariant, i.e.
$$\displaystyle{\sum_{j,k=0}^{\infty} a_{j,k}(\eta)^{m_{j}}(\overline{\eta})^{m_{k}}}
=\sum_{j,k=0}^{\infty} a_{j,k}e^{i\theta(m_{j,1}-m_{k,1})}(\eta)^{m_{j}}(\overline{\eta})^{m_{k}},$$
then the coefficient $a_{j,k}=0$ if $m_{j,1}\neq m_{k,1}$.

  \item[(2)] Take the transformation $\Psi(z_{01},z)=(z_{01},e^{i\theta}z)$, then $D(o,q)=D(o,\Psi(q))$, i.e.,
\begin{eqnarray*}
\displaystyle{\sum_{j,k=0}^{\infty} a_{j,k}(\eta)^{m_{j}}(\overline{\eta})^{m_{k}}}
=\sum_{j,k=0}^{\infty} a_{j,k}e^{i\theta(|m_{j,2}+\cdots+m_{j,d+1}|- |m_{k,2}+\cdots+m_{k,d+1}|)}(\eta)^{m_{j}}(\overline{\eta})^{m_{k}}.
\end{eqnarray*}

This implies $a_{j,k}=0$ if $|m_{j,2}+\cdots+m_{j,d+1}|\neq |m_{k,2}+\cdots+m_{k,d+1}|$.
\end{description}

One direct result is that the block matrix \eqref{equ:matrix a} is a block diagonal matrix.
In fact, every element of $A_{s,t}(s\neq t)$ satisfies  $|m_{j}|\neq|m_{k}|$.
This means at list one of the following inequations is true,
 $m_{j,1}\neq m_{k,1}$ or
$|m_{j,2}+\cdots+m_{j,d+1}|\neq |m_{k,2}+\cdots+m_{k,d+1}|$.
Thus $A_{s,t}=0$ for $s\neq t$. Hence, the matrix \eqref{equ:matrix a} can be written as
\begin{equation*}
\big(a_{jk}\big)=
\left(\begin{array}{cccc}
A_{0,0}&0& 0& \dots   \\
0&A_{1,1}&0& \dots \\
0&0&A_{2,2}& \dots\\
\vdots & \vdots & \vdots & \ddots\\
\end{array}
\right),
\end{equation*}
where $A_{0,0}=-\log \varphi(0)=D(o_{1}, o_{1})$ and
\begin{equation*}
  A_{i,i}=
\left(\begin{array}{ccccc}
A_{z_{01}(i)}(0)&0     &\cdots &0& \\
0&A_{z_{01}(i-1)}(0)   &\cdots &0& \\
0&0                 &\cdots &0& \\
0&0        &\cdots&A_{z_{01}(0)}(0)\\
\end{array}
\right),
\end{equation*}
and $A_{z_{01}(\sigma)}(0), (\sigma=0,1,\cdots,i)$ contains derivatives $\partial(z_{01} z )^{m_{j}},$
$~\partial(\overline{z_{01}}\overline{ z} )^{m_{k}}$ of order $2i$ with $|m_{j}|=|m_{k}|=i$ such that
$m_{j,1}=m_{k,1}=\sigma$.
This also implies  the positive definite of matrix $(b_{j,k})$ is determined
 by metrics  $A_{z_{01}(\sigma)}(0), \sigma=0,1,\cdots,i$, where $i=1,2,\cdots,\infty.$
For convenience, define multi-index $\alpha_{j}=(m_{j,2},\cdots,m_{j,n})$.
Then $m_{j}=(m_{j,1},\alpha_{j})$, $m_{k}=(m_{k,1},\alpha_{k})$ and
$$\sum_{j,k=0}^{\infty} a_{j,k}(\eta)^{m_{j}}(\overline{\eta})^{m_{k}}
=\sum_{j,k=0}^{\infty} a_{j,k}(z_{01})^{m_{j,1}}(z)^{\alpha_{j}}(\overline{z}_{01})^{m_{k,1}}(\overline{z})^{\alpha_{k}},$$
where
\begin{eqnarray}
a_{j,k}
&=&\frac{\partial^{|m_{j}|+|m_{k}|}D(o,q)}
{\partial(z_{01})^{m_{j,1}}\partial(z)^{\alpha_{j}}
\partial(\overline{z}_{01})^{m_{k,1}}\partial(\overline{z})^{\alpha_{k}}}\bigg|_{(z_{01},z)=0}.
\end{eqnarray}

Now, we will study when  $A_{z_{01}(\sigma)}(0), \sigma=0,1,\cdots,i$ are positive semidefinite.
\begin{description}
  \item[(i)]  $A_{z_{01}(i)}(0)$ is always positive.
\begin{equation}\label{equ:Ai}
a_{j,k}=\displaystyle{\frac{\partial^{2i}D(o,q)}{\partial z_{01}^{i}\partial\overline{z}_{01}^{i}}\bigg|_{\eta=0}
=\Gamma(i)\Gamma(i+1)(\varphi(z)-|z_{01}|^{2})^{-i}\bigg|_{\eta=0}>0}.
\end{equation}
  \item[(ii)] Consider the matrices  $A_{z_{01}(\sigma)}(0),$ $\sigma=1,2,\cdots,i-1$. By direct computation,
\begin{eqnarray*}
\frac{\partial^{\sigma}D(o,q)}{\partial z_{01}^{\sigma}}
&=&\Gamma(\sigma)(\varphi(z)-|z_{01}|^{2})^{-\sigma}\overline{z}_{01}^{\sigma}, \\
\frac{\partial^{2\sigma}D(o,q)}{\partial z_{01}^{\sigma}\partial\overline{z}_{01}^{\sigma}}
&=&\Gamma(\sigma)\sum_{k+j=\sigma} \binom{\sigma}{k}\frac{\partial^{k}(\varphi(z)-|z_{01}|^{2})^{-\sigma}}
{\partial\overline{z}_{01}^{k}}\frac{\partial^{j}\overline{z}_{01}^{\sigma}}{\partial\overline{z}_{01}^{j}}.
\end{eqnarray*}
Hence, we have
\begin{eqnarray*}
a_{j,k}&=&\frac{\partial^{|m_{j}|+|m_{k}|}D(o,q)}{\partial
\eta^{m_{j}}\partial\overline{\eta}^{m_{k}}}\bigg|_{\eta=0}\\
&=&\Gamma(\sigma)\Gamma(\sigma +1)\frac{\partial^{|\alpha_{j}|+|\alpha_{k}|}
(\varphi(z)-|z_{01}|^{2})^{-\sigma}}
{\partial(z)^{\alpha_{j}}\partial(\overline{z})^{\alpha_{k}}}
\bigg|_{\eta=0}\\
&=&\Gamma(\sigma)\Gamma(\sigma +1)\frac{\partial^{|\alpha_{j}|+|\alpha_{k}|}\varphi(z)^{-\sigma}}
{\partial(z)^{\alpha_{j}}\partial(\overline{z})^{\alpha_{k}}}\bigg|_{z=0}.\\
\end{eqnarray*}

Thus $\varphi(z)^{-\sigma}-1=e^{\sigma D(o_{1},p_{1})}-1.$
By Lemma \ref{lem:zedda2009}, if $(D, g^{D})$ is a K\"ahler submanifold  of $(\mathbb{C}^{N}, g_{0})$,
then $(D, \sigma g^{D})$ is a K\"ahler submanifold  of $(\mathbb{CP}^{\infty}, g_{FS})$
for any $\sigma>0$. By Calabi's criterion,
$\sigma g^{D}$ is $1$-resolvable at $o_{1}\in D$.
This implies the metrics  $A_{z_{01}(\sigma)}(0),$ $\sigma=1,2,\cdots,i-1$ are all positive semidefinite.
  \item[(iii)] Consider the matrix $A_{z_{01}(0)}(0)$. Because
               \begin{eqnarray}
              a_{j,k}=\frac{\partial^{|m_{j}|+|m_{k}|}D(o,q)}{\partial
              (z)^{\alpha_{j}}\partial(\overline{z})^{\alpha_{k}}}\bigg|_{\eta=0}
                =\frac{\partial^{|m_{j}|+|m_{k}|}
                -\log\varphi(z)}{\partial (z)^{\alpha_{j}}
                (\overline{z})^{\alpha_{k}}}\bigg|_{z=0}.
                \end{eqnarray}
                Notice that
             $-\log\varphi(z)=D(o_{1},p_{1}).$ By the same reason in (ii),
             we know the metric  $A_{z_{01}(0)}(0)$ is  positive semidefinite
             if $(D, g^{D})$ admits a local K\"ahler immersion into $(\mathbb{C}^{N}, g_{0})$ in the neighbourhood of $o_{1}$.

\end{description}

In summary, if $(D, g^{D})$ admits a K\"ahler immersion into
$(\mathbb{C}^{N}, g_{0})$, then
$g^{\Omega}$  is resolvable of rank $\infty$ at $o\in \Omega$.
By  Calabi's criterion,
 $(\Omega, g^{\Omega})$ admits a local full K\"ahler immersion into $(\mathbb{C}^{\infty}, g_{0})$.
Since the domain is simple connected, the immersion can be extended to a global one. Hence,
$(\Omega, g^{\Omega})$ is a full K\"ahler submanifold  of $\mathbb{C}^{\infty}$.

The converse of this theorem  can be easily obtained by studying the leading minors in terms of (iii).
\end{proof}

\begin{remark}
By this theorem  and Lemma \ref{lem:zedda2009},
$(\Omega, hg^{\Omega})$ also admits a K\"ahler immersion of $(\mathbb{CP}^{\infty}, g_{FS})$ for any $h>0$
if $(D, g^{D})$ admits a K\"ahler immersion into $(\mathbb{C}^{N}, g_{0})$.
\end{remark}

\subsection{Immersion in complex projective space}
When the ambient space is  $(\mathbb{CP}^{\infty}, g_{FS})$, there is a special phenomenon:
although we have know $(M, g^{M})$ is a K\"ahler submanifold of $(\mathbb{CP}^{\infty}, g_{FS})$,
 it is still hard to determine whether or not $(M, hg^{M})$ is a K\"ahler submanifold of $(\mathbb{CP}^{\infty}, g_{FS})$ for a positive number  $h$.
In compact case,  the condition implies that $(M, g^{M})$  is also a K\"ahler submanifold of  $(\mathbb{CP}^{N}, g_{FS})$, $N<\infty$.
 Thus $h$ must be a positive integer.  In noncompact case,
 Loi and Zedda shows that the irreducible bounded symmetric domain  $D$ equipped with Bergman metric admits
 an equivalent K\"ahler immersion into  $(\mathbb{CP}^{\infty}, g_{FS})$ if and only  if $h\gamma\in W(D)\setminus \{0\}$, where
$\gamma$ denotes the genus of $D$, $W(D)$ denotes the Wallach set \eqref{equ:wallach set}.
 Our following result shows that
 the bounded circular pseudoconvex  Hartogs domains preserve the similar property of the base.

 \begin{theorem}\label{thm:CPS}
Let $\Omega$ be  as in \eqref{equ:Omega} and  $h$ be a positive number. Suppose that $\Omega$ is
a simply connected circular domain with center zero and
the function $-\log\varphi$ is the special K\"ahler potential determined by the diastatic function of  $g^{D}$.
Then $(\Omega, hg^{\Omega})$ admits a full K\"ahler immersion into  $(\mathbb{CP}^{\infty}, g_{FS})$
if and only if $(D, (h+\sigma)g^{D})$  admits a K\"ahler immersion into
$(\mathbb{CP}^{N}, g_{FS})$, $N\leq \infty$, for all $\sigma\in \mathbb{N}$.
\end{theorem}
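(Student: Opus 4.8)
The plan is to follow the scheme of the proof of Theorem \ref{thm:CES}, replacing the resolvability matrix $(a_{jk})$ (which governs immersions into $\mathbb{C}^{N}$) by the $1$-resolvability matrix $(b_{jk})$ built from $e^{D_{o}}-1$ (which governs immersions into $\mathbb{CP}^{N}$). First I would invoke Lemma \ref{lem:diastasis}: the special K\"ahler potential determined by the diastatic function of $hg^{\Omega}$ about the origin $o$ is $hD(o,q)=-h\log(\varphi(z)-\|z_{0}\|^{2})$. Hence, by Calabi's criterion, the existence of a full K\"ahler immersion of $(\Omega, hg^{\Omega})$ into $(\mathbb{CP}^{\infty}, g_{FS})$ is equivalent to the matrix $(b_{jk}(hg^{\Omega}))$ coming from
$$e^{hD_{o}}-1=\big(\varphi(z)-\|z_{0}\|^{2}\big)^{-h}-1$$
being positive semidefinite of rank exactly $\infty$.

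Next I would expand, using the generalized binomial series (legitimate since $\|z_{0}\|^{2}<\varphi$ on $\Omega$) followed by the multinomial theorem on the fiber variables:
$$\big(\varphi-\|z_{0}\|^{2}\big)^{-h}=\varphi^{-h}\Big(1-\tfrac{\|z_{0}\|^{2}}{\varphi}\Big)^{-h}=\sum_{\sigma=0}^{\infty}\binom{h+\sigma-1}{\sigma}\Big(\sum_{|\gamma|=\sigma}\binom{\sigma}{\gamma}z_{0}^{\gamma}\overline{z}_{0}^{\gamma}\Big)\varphi(z)^{-(h+\sigma)}.$$
Then I would invoke circular symmetry exactly as in points (1)--(2) of the proof of Theorem \ref{thm:CES}: since $\Omega$ is circular with center zero and $\varphi(e^{\sqrt{-1}\theta}z)=\varphi(z)$, the torus action $z_{0s}\mapsto e^{\sqrt{-1}\theta_{s}}z_{0s}$ together with $z\mapsto e^{\sqrt{-1}\theta}z$ leaves $hD(o,q)$ invariant, forcing $b_{jk}=0$ unless the fiber multi-indices in $z_{0}$ coincide and $|\,\cdot\,|$ of the base multi-indices coincide. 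Consequently $(b_{jk}(hg^{\Omega}))$ is block diagonal, with one block $B_{\gamma}$ for each fiber multi-index $\gamma$.

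The key identification is that for each $\gamma$ with $|\gamma|=\sigma$, the block $B_{\gamma}$ equals the positive constant $\binom{h+\sigma-1}{\sigma}\binom{\sigma}{\gamma}$ times the coefficient matrix of $\varphi(z)^{-(h+\sigma)}$ in $(z,\overline{z})$. Because $-\log\varphi=D(o_{1},\cdot)$ is the diastatic function of $g^{D}$, we have $\varphi^{-(h+\sigma)}-1=e^{(h+\sigma)D_{o_{1}}}-1$, so $B_{\gamma}$ is a positive multiple of $(b_{jk}((h+\sigma)g^{D}))$. Since positive scalars do not affect semidefiniteness, $(b_{jk}(hg^{\Omega}))$ is positive semidefinite if and only if $(b_{jk}((h+\sigma)g^{D}))$ is positive semidefinite for every $\sigma\in\mathbb{N}$; by Calabi's criterion this is exactly the condition that $(D,(h+\sigma)g^{D})$ admits a K\"ahler immersion into some $(\mathbb{CP}^{N},g_{FS})$, $N\leq\infty$, for all $\sigma\in\mathbb{N}$. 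Rank exactly $\infty$ (hence fullness) holds because infinitely many blocks are nonzero: the $z=0$ entry of $B_{\gamma}$ is $\binom{h+\sigma-1}{\sigma}\binom{\sigma}{\gamma}\varphi(0)^{-(h+\sigma)}>0$. Finally, as $\Omega$ is simply connected, the resulting local immersion extends to a global one by Theorem \ref{Calabi's criterion}, and the converse follows by reading the same block decomposition backwards.

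The main obstacle I expect is the bookkeeping that decouples the single condition ``$hg^{\Omega}$ is $1$-resolvable'' into the infinite family ``$(h+\sigma)g^{D}$ is $1$-resolvable for all $\sigma\in\mathbb{N}$'': one must verify carefully that circular invariance really block-diagonalizes $(b_{jk})$ along the fiber multi-index, and that each diagonal block coincides with $(b_{jk}((h+\sigma)g^{D}))$ up to the positive binomial/multinomial factor, so that the full range $\sigma=0,1,2,\dots$ is produced by $d_{0}\geq 1$. As an alternative to the direct computation one may reduce to $d_{0}=1$ by the same induction as in Theorem \ref{thm:CES}, viewing $\Omega_{k+1}$ as a Hartogs domain over $\Omega_{k}$ and noting that the shifts $h\mapsto h+\sigma$ compose additively, which again yields precisely the condition for all $\sigma\in\mathbb{N}$.
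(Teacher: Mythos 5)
Your proposal is correct and follows essentially the same route as the paper's proof: Lemma \ref{lem:diastasis} plus Calabi's criterion, block-diagonalization of the $1$-resolvability matrix of $e^{hD_{o}}-1$ using the circular symmetry, identification of the block attached to fiber degree $\sigma$ with a positive multiple of the $1$-resolvability matrix of $(h+\sigma)g^{D}$ via $\varphi^{-(h+\sigma)}-1=e^{(h+\sigma)D_{o_{1}}}-1$, fullness from the strictly positive pure-fiber entries, and globalization by simple connectedness. Your only deviations are cosmetic: you compute the blocks by a binomial/multinomial expansion of $(\varphi-\|z_{0}\|^{2})^{-h}$ valid for all $d_{0}$ at once (yielding the factor $\binom{h+\sigma-1}{\sigma}$, i.e.\ the paper's $\Gamma(h+\sigma)/(\Gamma(h)\Gamma(\sigma+1))$ up to the harmless factorial normalization of Taylor coefficients versus derivatives), whereas the paper differentiates directly after first reducing to $d_{0}=1$ by the induction of Theorem \ref{thm:CES}.
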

\begin{proof}
By the same reason in the proof of Theorem \ref{thm:CES},
it suffices to prove the case $d_{0}=1$.
 Let $o=(o_{0},o_{1})$ be the origin of $\mathbb{C}\times\mathbb{C}^{d}$, $\eta=(z_{01}, z)=(z_{01}, z_{1},\cdots,z_{d})$ be the coordinate of the point $q\in \Omega$.
Consider the domain $\Omega$ with K\"ahler metric $hg^{\Omega}$, then the globally defined K\"ahler potential function around
the origin $o$ is
 \begin{equation}\label{equ:Dob}
\displaystyle{D(o,q)=-h\log \big(\varphi(z)-|z_{01}|^{2}\big).}
\end{equation}
By Lemma \ref{lem:diastasis}, we know
$D(o,q)$ is the diastatic function for $hg^{\Omega}$ around the origin.
The power expansion
\begin{equation}\label{equ:power expansion b}
e^{D(o,q)}-1
=(\varphi(z)-|z_{01}|^{2})^{-h}-1
=\sum\limits_{j,k=0}^{\infty} b_{j,k}(\eta)^{m_{j}}
(\overline{\eta})^{m_{k}}.
\end{equation}

The property of the matrix of coefficients can be described as follows:
the elements of the row vector have the same multi-index $m_{j}$,
and $|m_{k}|$ increases with the increase of  column number;
the elements of the column vector have the same multi-index $m_{k}$, and $|m_{j}|$   increases with the increase of
row number; The matrix of coefficient can be given by the following  block matrix.
\begin{equation}\label{equ:matrix b}
(b_{j,k})=
\left(\begin{array}{cccc}
B_{0,0}&B_{0,1} & B_{0,0}& \dots   \\
B_{1,0}&B_{1,1}&B_{1,2}& \dots \\
B_{2,0}&B_{2,1}&B_{2,2}& \dots\\
\vdots & \vdots & \vdots & \ddots\\
\end{array}
\right),
\end{equation}
where the element of matrix $B_{s,t}(s,t\in \mathbb{N})$ satisfies that $|m_{j}|=s$ and $|m_{k}|=t$.

The matrix  $(b_{j,k})$ has the similar properties of the matrix $(a_{j,k})$ in \eqref{equ:matrix a}.
Hence, the matrix \eqref{equ:matrix b} can be written as follows.
\begin{equation*}
\big(b_{jk}\big)=
\left(\begin{array}{cccc}
B_{0,0}&0& 0& \dots   \\
0&B_{1,1}&0& \dots \\
0&0&B_{2,2}& \dots\\
\vdots & \vdots & \vdots & \ddots\\
\end{array}
\right),
\end{equation*}
where $B_{0,0}=\varphi(0)^{-h}-1=e^{hD(o_{1}, o_{1})}-1$ and
\begin{equation*}
B_{i,i}=
\left(\begin{array}{ccccc}
B_{z_{01}(i)}(0)&0     &\cdots &0& \\
0&B_{z_{01}(i-1)}(0)   &\cdots &0& \\
0&0                 &\cdots &0& \\
0&0        &\cdots&B_{z_{01}(0)}(0)\\
\end{array}
\right),
\end{equation*}
and $B_{z_{01}(\sigma)}(0), (\sigma=0,1,\cdots,i)$ contains derivatives $\partial(z_{01} z )^{m_{j}},$
$~\partial(\overline{z}_{01}\overline{ z} )^{m_{k}}$ of order $2i$ with $|m_{j}|=|m_{k}|=i$ such that
$m_{j,1}=m_{k,1}=\sigma$.
This also implies  the positive definite of matrix $(b_{j,k})$ is determined
 by metrics  $B_{z_{01}(\sigma)}(0), \sigma=0,1,\cdots,i$, where $i=1,2,\cdots,\infty.$
For convenience, define $\alpha_{j}=(m_{j,2},\cdots,m_{j,n})$.
Then $m_{j}=(m_{j,1},\alpha_{j})$, $m_{k}=(m_{k,1},\alpha_{k})$ and
$$\sum_{j,k=0}^{\infty} b_{j,k}(\eta)^{m_{j}}(\overline{\eta})^{m_{k}}
=\sum_{j,k=0}^{\infty} b_{j,k}(z_{01})^{m_{j,1}}(z)^{\alpha_{j}}(\overline{z}_{01})^{m_{k,1}}(\overline{z})^{\alpha_{k}}.$$
where
\begin{eqnarray}
b_{j,k}
&=&\frac{\partial^{|m_{j}|+|m_{k}|}(\varphi(z)-|z_{01}|^{2})^{-h}-1}
{\partial(z_{01})^{m_{j,1}}\partial(z)^{\alpha_{j}}
\partial(\overline{z}_{01})^{m_{k,1}}\partial(\overline{z})^{\alpha_{k}}}\bigg|_{(z_{01},z)=0}.
\end{eqnarray}

We now study  under what conditions, $B_{z_{01}(\sigma)}(0), \sigma=0,1,\cdots,i$ can be  positive semidefinite.
\begin{description}
  \item[(i)]  $B_{z_{01}(i)}(0)$ is always positive.
$$b_{j,k}=\displaystyle{\frac{\partial^{2i}(\varphi(z)-|z_{01}|^{2})^{-h}}{\partial z_{01}^{i}\partial\overline{z}_{01}^{i}}\bigg|_{\eta=0}
=\frac{\Gamma(h+i)\Gamma(i+1)}{\Gamma(h)}(\varphi(z)-|z_{01}|^{2})^{-(h+i)}}\bigg|_{\eta=0}>0.$$
  \item[(ii)] Consider the matrices  $B_{z_{01}(\sigma)}(0),$ $\sigma=1,2,\cdots,i-1$. By direct computation,
 we have
\begin{eqnarray*}
b_{j,k}&=&\frac{\partial^{|m_{j}|+|m_{k}|}(\varphi(z)-|z_{01}|^{2})^{-h}}{\partial
\eta^{m_{j}}\partial\overline{\eta}^{m_{k}}}\bigg|_{\eta=0}\\
&=&\frac{\Gamma(h+\sigma)\Gamma(\sigma +1)}{\Gamma(h)}\frac{\partial^{|\alpha_{j}|+|\alpha_{k}|}
(\varphi(z)-|z_{01}|^{2})^{-(h+\sigma)}}
{\partial(z)^{\alpha_{j}}\partial(\overline{z})^{\alpha_{k}}}
\bigg|_{(z_{01},z)=0}\\
&=&\frac{\Gamma(h+\sigma)\Gamma(\sigma +1)}{\Gamma(h)}\frac{\partial^{|\alpha_{j}|+|\alpha_{k}|}\varphi(z)^{-(h+\sigma)}}
{\partial(z)^{\alpha_{j}}\partial(\overline{z})^{\alpha_{k}}}\bigg|_{z=0}.\\
\end{eqnarray*}
Since $\varphi(z)^{-(h+\sigma)}-1=e^{(h+\sigma)D(o_{1},p_{1})}-1.$
If $(D, (h+\sigma)g^{D})$ is a K\"ahler submanifold  of $(\mathbb{CP}^{\infty}, g_{FS})$ for $\sigma\in \mathbb{N}$,  by Calabi's criterion,
the  coefficient  matrix of power series of the right side
is positive semidefinite. This implies the metrics  $B_{z_{01}(\sigma)}(0),$ $\sigma=1,2,\cdots,i-1$ are all positive semidefinite.
  \item[(iii)] Consider the matrix $B_{z_{01}(0)}(0)$. Because
               \begin{eqnarray}
              b_{j,k}=\frac{\partial^{|m_{j}|+|m_{k}|}(\varphi(z)-|z_{01}|^{2})^{-h}}{\partial
              (z)^{\alpha_{j}}\partial(\overline{z})^{\alpha_{k}}}\bigg|_{(z_{01},z)=0}
                =\frac{\partial^{|m_{j}|+|m_{k}|}
                \varphi(z)^{-h}}{\partial (z)^{m_{j}}
                (\overline{z})^{m_{k}}}\bigg|_{z=0}.
                \end{eqnarray}
                Notice that
             $\varphi(z)^{-h}-1=e^{hD(o_{1},p_{1})}-1.$
             By the same reason in (ii), we know that
          the metric  $B_{z_{01}(0)}(0)$ is  positive semidefinite
          if $(D, hg^{D})$ is a K\"ahler submanifold  of $(\mathbb{CP}^{\infty}, g_{FS})$.
\end{description}

Finally, if $(D, (h+\sigma)g^{D})$  admits  a K\"ahler immersion into $(\mathbb{CP}^{\infty}, g_{FS})$ for all $\sigma\in \mathbb{N}$,
then $(\Omega, hg^{\Omega})$ admits a local full K\"ahler immersion into $(\mathbb{CP}^{\infty}, g_{FS})$.
Since $\Omega$ is simple connected, the immersion can be extended to a global one.

\ms
The converse of this theorem  can be easily obtained by (ii) and (iii).
\end{proof}

\begin{remark}
By Theorem  \ref{thm:CPS}, we know $(\Omega, hg^{\Omega})$ is  a K\"ahler submanifold  of $(\mathbb{CP}^{\infty}, g_{FS})$,
then   $(\Omega, (h+\sigma)g^{\Omega})$, $\sigma\in \mathbb{N}$, are all K\"ahler submanifolds  of $(\mathbb{CP}^{\infty}, g_{FS})$.
Suppose that there exists a positive number $h_{0}$ such $h_{0}g^{D}$ is
 not a K\"ahler submanifold of $(\mathbb{CP}^{\infty}, g_{FS})$.
 By Theorem  \ref{thm:CPS},  $(\Omega, h_{0}g^{\Omega})$ is  not a K\"ahler submanifold  of $(\mathbb{CP}^{\infty}, g_{FS})$.
 By Lemma \ref{lem:zedda2009},    $(\Omega, h_{0}g^{\Omega})$ is  not a K\"ahler submanifold  of $(\mathbb{C}^{\infty}, g_{0})$.
 By Lemma \ref{lem:Di2012},  $(\Omega, hg^{\Omega})$ is
    not a K\"ahler submanifold  of $(\mathbb{CH}^{\infty}, g_{hyp})$ for any $h>0$.
\end{remark}

\subsection{Immersion in complex hyperbolic space}\label{sec:3.5}

In this section, we deal with K\"ahler immersions of $(\Omega, hg^{\Omega})$ into complex hyperbolic space
 $(\mathbb{CH}^{N}, g_{hyp})$, $N\leq\infty$.

\begin{theorem}\label{thm:CHP}
Let $\Omega$ be  as in \eqref{equ:Omega}. Suppose that $\Omega$ is
a simply connected circular domain with center zero and
the function $-\log\varphi$ is the special K\"ahler potential  determined by the diastatic function of  $g^{D}$.
Then $(\Omega, hg^{\Omega})$
 is a full K\"ahler submanifold of $(\mathbb{CH}^{\infty}, g_{hyp})$ if and only
if $(D, hg^{D})$ is a K\"ahler submanifold of $(\mathbb{CH}^{N}, g_{hyp})$, $N\leq\infty$, and $0<h\leq1$.
\end{theorem}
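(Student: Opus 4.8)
The plan is to follow the scheme already used in the proofs of Theorem \ref{thm:CES} and Theorem \ref{thm:CPS}, adapting the Calabi object to the hyperbolic ambient space. First I would reduce to the case $d_0=1$ by the same induction, viewing $\Omega_{k+1}$ as a bounded pseudoconvex Hartogs domain over $\Omega_k$ with one-dimensional fibre; Lemma \ref{lem:diastasis} guarantees that at each stage the relevant potential is again the special potential determined by the diastatic function of the base metric, and circularity and simple connectedness are preserved. For $\mathbb{CH}^\infty$ the governing notion is $-1$-resolvability, so the whole problem becomes the positive semidefiniteness of the coefficient matrix $c_{jk}$ of $1-e^{-D(o,q)}$ in \eqref{equ:D bc}, where by Lemma \ref{lem:diastasis} the diastasis of $hg^\Omega$ is $D(o,q)=-h\log(\varphi(z)-|z_{01}|^2)$. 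Hence the function to expand is $1-e^{-D(o,q)}=1-(\varphi(z)-|z_{01}|^2)^h$.

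Next I would expand in powers of $|z_{01}|^2$:
\[
1-(\varphi-|z_{01}|^2)^h=(1-\varphi^h)+\sum_{\sigma\ge1}c_\sigma\,\varphi^{\,h-\sigma}|z_{01}|^{2\sigma},\qquad c_\sigma=-\binom{h}{\sigma}(-1)^\sigma .
\]
By the same two circular symmetries $(z_{01},z)\mapsto(e^{i\theta}z_{01},z)$ and $(z_{01},z)\mapsto(z_{01},e^{i\theta}z)$ used before, the matrix $(c_{jk})$ is block diagonal, the block $C_{z_{01}(\sigma)}(0)$ collecting the coefficients of $z_{01}$-degree $\sigma$. The $\sigma=0$ block is exactly the $-1$-resolvability matrix of $1-\varphi^h=1-e^{-hD(o_1,p_1)}$, so it is positive semidefinite if and only if $(D,hg^D)\hookrightarrow\mathbb{CH}^N$. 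For $\sigma\ge1$ the block equals $c_\sigma$ times the coefficient matrix of $\varphi^{\,h-\sigma}=e^{(\sigma-h)D(o_1,p_1)}$, which is a projective-type ($e^{\lambda D}$) object.

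The main obstacle is precisely these $\sigma\ge1$ blocks: they carry $\mathbb{CP}^\infty$-type data, so the hyperbolic hypothesis on the base does not act on them directly. The key step resolving this is a chain of the quoted lemmas. Assuming $0<h\le1$, a sign analysis of the numerator $h(h-1)\cdots(h-\sigma+1)$ of $\binom{h}{\sigma}$ shows $c_\sigma>0$ for every $\sigma\ge1$. From $(D,hg^D)\hookrightarrow\mathbb{CH}^N$ I invoke Lemma \ref{lem:Di2012} to get $(D,hg^D)\hookrightarrow\mathbb{C}^\infty$; since immersions into $\mathbb{C}^\infty$ are invariant under rescaling of the metric (the remark preceding Theorem \ref{thm:CES}), this gives $(D,g^D)\hookrightarrow\mathbb{C}^\infty$, whence Lemma \ref{lem:zedda2009} yields $(D,\mu g^D)\hookrightarrow\mathbb{CP}^\infty$ for every $\mu>0$, in particular for $\mu=\sigma-h>0$. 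Thus the matrix of $e^{(\sigma-h)D}-1$ is positive semidefinite, and adding back the constant term, a rank-one positive semidefinite matrix supported at the origin, shows the matrix of $e^{(\sigma-h)D}$, hence $C_{z_{01}(\sigma)}(0)$, is positive semidefinite. With all blocks positive semidefinite, $hg^\Omega$ is $-1$-resolvable at $o$ of rank $\infty$ (the diagonal entries $c_\sigma$ supply infinitely many positive directions), so a neighbourhood of $o$ admits a full immersion into $\mathbb{CH}^\infty$; simple connectedness and Theorem \ref{Calabi's criterion} globalize it.

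For the converse I would read the block decomposition backwards: if $(\Omega,hg^\Omega)\hookrightarrow\mathbb{CH}^\infty$ then $(c_{jk})$ is positive semidefinite, hence so is every diagonal block. The $\sigma=0$ block gives $(D,hg^D)\hookrightarrow\mathbb{CH}^N$ with $N\le\infty$, while positivity of the diagonal entry attached to the monomial $|z_{01}|^4$ reads $c_2=-\tfrac12 h(h-1)\ge0$, which together with $h>0$ forces $0<h\le1$. I expect the only genuinely delicate point beyond this bookkeeping to be the passage, in the sufficiency direction, from the single hyperbolic hypothesis on the base to the projective immersibility of \emph{all} rescalings $\mu g^D$; once that lemma-chain is in place the remaining determinant and symmetry computations are the same as in the Euclidean and projective cases.
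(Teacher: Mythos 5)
Your proposal is correct and follows essentially the same route as the paper's proof: reduction to $d_{0}=1$, block diagonalization of the coefficient matrix of $1-(\varphi(z)-|z_{01}|^{2})^{h}$ via the two circle actions $(z_{01},z)\mapsto(e^{i\theta}z_{01},z)$ and $(z_{01},z)\mapsto(z_{01},e^{i\theta}z)$, identification of the $\sigma=0$ block with the $-1$-resolvability data of $(D,hg^{D})$, and handling of the $\sigma\geq 1$ blocks through exactly the paper's lemma chain (Lemma \ref{lem:Di2012} to pass to $\mathbb{C}^{\infty}$, rescaling, then Lemma \ref{lem:zedda2009} to get $1$-resolvability of $(\sigma-h)g^{D}$). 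Two organizational differences are to your credit: your closed-form binomial expansion with $c_{\sigma}=(-1)^{\sigma+1}\binom{h}{\sigma}$ replaces and unifies the paper's separate derivative computations for integer and non-integer $h$, and your converse via the single coefficient $c_{2}=-\tfrac{1}{2}h(h-1)\geq 0$ handles all $h>1$ at once, where the paper runs three sign cases. One inaccuracy to fix: at $h=1$ the numerator $h(h-1)\cdots(h-\sigma+1)$ vanishes for $\sigma\geq 2$, so $c_{\sigma}=0$ there, not $c_{\sigma}>0$; positive semidefiniteness is unaffected (the paper likewise notes these blocks vanish when $h=1$), but your justification of rank $\infty$, and hence of fullness in $\mathbb{CH}^{\infty}$, by ``the diagonal entries $c_{\sigma}$ supply infinitely many positive directions'' breaks down at $h=1$, where the rank is governed by the base block alone --- a point the paper's own proof also glosses over rather than resolves.
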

\begin{proof}
Let $d_{0}=1$ and $o=(o_{0},o_{1})$ be the origin of $\mathbb{C}\times\mathbb{C}^{d}$, $\eta=(z_{01}, z)=(z_{01}, z_{1},\cdots,z_{d})$ be the coordinate of the point $q\in \Omega$.
Consider the domain $\Omega$ with K\"ahler metric $hg^{\Omega}$, then the globally defined K\"ahler potential function around
the origin $o$ is
 \begin{equation}\label{equ:Doc}
\displaystyle{D(o,q)=-h\log \big(\varphi(z)-|z_{01}|^{2}\big).}
\end{equation}
By Lemma \ref{lem:diastasis}, we know
$D(o,q)$ is the diastatic function for $hg^{\Omega}$ around the origin.  In Bochner coordinate,
\begin{equation}\label{equ:power expansion c}
1-e^{-D(o,q)}
=1-(\varphi(z)-|z_{01}|^{2})^{h}
=\sum\limits_{j,k=0}^{\infty} c_{j,k}(\eta)^{m_{j}}
(\overline{\eta})^{m_{k}}.
\end{equation}

The property of the matrix of coefficients can be described as follows:
the elements of the row vector have the same multi-index $m_{j}$,
and $|m_{k}|$ increases with the increase of  column number;
the elements of the column vector have the same multi-index $m_{k}$, and $|m_{j}|$   increases with the increase of
row number; The matrix of coefficient can be given by the following  block matrix.
\begin{equation}\label{equ:matrix c}
(c_{j,k})=
\left(\begin{array}{cccc}
C_{0,0}&C_{0,1} & C_{0,0}& \dots   \\
C_{1,0}&C_{1,1}&C_{1,2}& \dots \\
C_{2,0}&C_{2,1}&C_{2,2}& \dots\\
\vdots & \vdots & \vdots & \ddots\\
\end{array}
\right),
\end{equation}
where the element of matrix $C_{s,t}(s,t\in \mathbb{N})$ satisfies that $|m_{j}|=s$ and $|m_{k}|=t$.
The matrix  $(c_{j,k})$ has the similar properties of the matrix $(a_{j,k})$ in \eqref{equ:matrix a}.
Hence, the matrix \eqref{equ:matrix c} can be written as follows.
\begin{equation*}
\big(c_{jk}\big)=
\left(\begin{array}{cccc}
C_{0,0}&0& 0& \dots   \\
0&C_{1,1}&0& \dots \\
0&0&C_{2,2}& \dots\\
\vdots & \vdots & \vdots & \ddots\\
\end{array}
\right),
\end{equation*}
where $c_{0,0}=1-\varphi(0)^{h}=1-e^{-D(o_{1},o_{1})}$ and
\begin{equation*}
C_{i,i}=
\left(\begin{array}{ccccc}
C_{z_{01}(i)}(0)&0     &\cdots &0& \\
0&C_{z_{01}(i-1)}(0)   &\cdots &0& \\
0&0                 &\cdots &0& \\
0&0        &\cdots&C_{z_{01}(0)}(0)\\
\end{array}
\right),
\end{equation*}
and $C_{z_{01}(\sigma)}(0), (\sigma=0,1,\cdots,i)$ contains derivatives $\partial(z_{01} z )^{m_{j}},$
$~\partial(\overline{z}_{01}\overline{ z} )^{m_{k}}$ of order $2i$ with $|m_{j}|=|m_{k}|=i$ such that
$m_{j,1}=m_{k,1}=\sigma$.
This also implies  the positive definite of matrix $(c_{j,k})$ is determined
 by matrices  $C_{z_{01}(\sigma)}(0), \sigma=0,1,\cdots,i$, where $i=1,2,\cdots,\infty.$
For convenience, define $\alpha_{j}=(m_{j,2},\cdots,m_{j,n})$.
Then $m_{j}=(m_{j,1},\alpha_{j})$, $m_{k}=(m_{k,1},\alpha_{k})$ and
$$\sum_{j,k=0}^{\infty} c_{j,k}(\eta)^{m_{j}}(\overline{\eta})^{m_{k}}
=\sum_{j,k=0}^{\infty} c_{j,k}(z_{01})^{m_{j,1}}(z)^{\alpha_{j}}(\overline{z}_{01})^{m_{k,1}}(\overline{z})^{\alpha_{k}},$$
where
\begin{eqnarray}
c_{j,k}
&=&\frac{\partial^{|m_{j}|+|m_{k}|}(1-(\varphi(z)-|z_{01}|^{2})^{h})}
{\partial(z_{01})^{m_{j,1}}\partial(z)^{\alpha_{j}}
\partial(\overline{z}_{01})^{m_{k,1}}\partial(\overline{z})^{\alpha_{k}}}\bigg|_{(z_{01},z)=0}.
\end{eqnarray}

In the following two cases, we will study
sufficient and necessary conditions for  $C_{z_{01}(\sigma)}(0)$ are positive semidefinite.

\ms
\textbf{(1) The case that $h$ is a integer.}
\begin{description}
  \item[(i)] Consider $C_{z_{01}(i)}(0)$.
\begin{equation}
c_{j,k}= \left\{
\begin{array}{l}
 \frac{(-1)^{i+1}\Gamma(h+1)\Gamma(i+1)}{\Gamma(h-i+1)}(\varphi(z)-|z_{01}|^{2})^{h-i}|_{\eta=0} \ \ \ \ \text{for} \ i\leq h;\\
  0 \ \ \ \ \ \ \ \ \ \ \ \ \ \ \ \ \ \ \ \ \ \ \ \  \ \ \  \ \  \ \  \ \ \ \  \ \  \ \ \ \ \ \ \ \ \ \ \ \ \ \ \ \ \ \ \ \ \ \ \ \ \ \ \ \ \ \ \ \ \ \ \text{for} \ i>h.\\
\end{array}
\right.
\end{equation}
So we can get $C_{z_{01}(i)}(0)\geq 0$ for all $1\leq i< \infty$ if and only if $h=1$.

\ms
In the following, we only need to consider the case that $h=1$.

  \item[(ii)] The matrices  $C_{z_{01}(\sigma)}(0)\equiv0$ for $\sigma=1,2,\cdots,i-1$ when $h=1$.
  \item[(iii)] Consider the matrix $C_{z_{01}(0)}(0)$ when $h=1$. Because
               \begin{eqnarray*}
              c_{j,k}&=&-\frac{\partial^{|m_{j}|+|m_{k}|}(\varphi(z)-|z_{01}|^{2})}{\partial
              (z)^{\alpha_{j}}\partial(\overline{z})^{\alpha_{k}}}\bigg|_{(z_{01},z)=0}
                =-\frac{\partial^{|m_{j}|+|m_{k}|}
                \varphi(z)}{\partial (z)^{\alpha_{j}}
                (\overline{z})^{\alpha_{k}}}\bigg|_{z=0}\\
                &=&-\frac{\partial^{|m_{j}|+|m_{k}|}
                (1-e^{-D(o_{1},p_{1})})}{\partial (z)^{\alpha_{j}}
                (\overline{z})^{\alpha_{k}}}\bigg|_{z=0}.
                \end{eqnarray*}
\end{description}
By  Calabi's criterion and the discussion above,
 $(\Omega, g^{\Omega})$ admits a local K\"ahler immersion into $(\mathbb{CH}^{\infty}, g_{hyp})$
if and only if $(D, g^{D})$ admits a local K\"ahler immersion into $(\mathbb{CH}^{N}, g_{hyp})$, $N\leq\infty$.
By Calabi's criterion, we know the K\"ahler immersion can be extended to a global one.

\ms
\textbf{(2) The case that $h$ is not a integer.}
\begin{description}
  \item[(i)] Consider $C_{z_{01}(i)}(0)$.
\begin{equation*}
c_{j,k}= \left\{
\begin{array}{l}
 \frac{(-1)^{i+1}\Gamma(h+1)\Gamma(i+1)}{\Gamma(h-i+1)}(\varphi(z)-|z_{01}|^{2})^{h-i}|_{\eta=0} \ \  \ \ \ \ \ \  \ \
  \ \ \ \  \ \ \ \text{for} \ i<h;\\ \\
  \frac{(-1)^{2i-[h]}\Gamma(h+1)\Gamma(i-h)\Gamma(i+1)}{\Gamma (h-[h])\Gamma([h]-h+2)}(\varphi(z)-|z_{01}|^{2})^{h-i}|_{\eta=0} \  \ \ \ \ \text{for} \ i>h.\\
\end{array}
\right.
\end{equation*}
If $0<h<1$, then $h<1\leq i$. Thus $c_{j,k}>0$ for any $i\in \mathbb{N}^{+}$.
If $1<h<2$, then  $c_{j,k}<0$ for $i\geq 2$.
If $h>2$, then  $c_{j,k}<0$ for $i=2$.
Hence,
$C_{z_{01}(i)}(0)$ can  be always  non negative if and only if $0<h<1$.

  \item[(ii)] Consider the matrices  $C_{z_{01}(\sigma)}(0)$, $\sigma=1,2,\cdots,i-1$, when $0<h<1$.

 \begin{equation*}
c_{j,k}=
  \frac{(-1)^{2\sigma}\Gamma(h+1)\Gamma(\sigma-h)\Gamma(\sigma+1)}{\Gamma (h-[h])\Gamma([h]-h+2)}\frac{\partial^{|\alpha_{j}|+|\alpha_{k}|}\varphi(z)^{h-\sigma}}
{\partial(z)^{\alpha_{j}}\partial(\overline{z})^{\alpha_{k}}}\bigg|_{z=0}.\\
\end{equation*}

Notice that $\varphi(z)^{h-\sigma}-1=e^{(\sigma-h)D(o_{1},p_{1})}-1.$
By Lemma \ref{lem:Di2012} and Lemma \ref{lem:zedda2009},
$(\sigma-h)g^{D}$ is always $1$-resolvable if $g^{D}$ is a K\"ahler submanifold of $(\mathbb{CH}^{N}, g_{hyp})$.

  \item[(iii)] Consider the matrix $C_{z_{01}(0)}(0)$ when $0<h<1$. Because
               \begin{eqnarray*}
              c_{j,k}&=&-\frac{\partial^{|m_{j}|+|m_{k}|}(\varphi(z)-|z_{01}|^{2})^{h}}{\partial
              (z)^{\alpha_{j}}\partial(\overline{z})^{\alpha_{k}}}\bigg|_{(z_{01},z)=0}
                =-\frac{\partial^{|m_{j}|+|m_{k}|}
                \varphi(z)^{h}}{\partial (z)^{\alpha_{j}}
                (\overline{z})^{\alpha_{k}}}\bigg|_{z=0}\\
                &=&-\frac{\partial^{|m_{j}|+|m_{k}|}
                (1-e^{-hD(o_{1},p_{1})})}{\partial (z)^{\alpha_{j}}
                (\overline{z})^{\alpha_{k}}}\bigg|_{z=0}.
                \end{eqnarray*}
\end{description}
By (i) and Calabi's criterion,
 $(\Omega, hg^{\Omega})$ admits a local K\"ahler immersion in $\mathbb{CH}^{\infty}$ if and only
  $(D, hg^{D})$ admits a local K\"ahler immersion in $\mathbb{CH}^{N}$, $N\leq \infty$ for $0<h<1$.
By Calabi's criterion again, we know the K\"ahler immersion can be extended to a global one.

\ms
The converse of this theorem  can be easily obtained by  the discussion above.
\end{proof}

\begin{remark}
By Lemma \ref{lem:Di2012},  if
$(D, g^{\Omega})$ is a  K\"ahler submanifolds  of $(\mathbb{CH}^{N}, g_{hyp})$, $N\leq \infty$, then
$(D, g^{\Omega})$ is a  K\"ahler submanifolds  of $(\mathbb{C}^{\infty}, g_{0})$.
Thus we can reduce the problem of studying the existence of K\"ahler immersions from
 $(\Omega, g^{\Omega})$ into complex space forms
to the case in  Theorem \ref{thm:CES}.
\end{remark}

By (i) in Theorem \ref{thm:CES}, Theorem \ref{thm:CPS} and  Theorem \ref{thm:CHP},
it implies that there does not exist a K\"ahler immersion of  $(\Omega, g^{\Omega})$  into any finite dimensional complex space forms.
For the reader's convenience, we summarize the results obtained so far by the following table.
\begin{table}[!hbp]
  \centering
  \begin{minipage}[t]{0.4\linewidth}
  \caption{Conclusion}
  \label{tab:1}
\begin{tabular}{|l|c|c|c|c|c|c|}
  \hline
$\exists$  & $\mathbb{C}^{n}$ &$\mathbb{C}^{\infty}$ &  $\mathbb{CP}^{n}$ &$\mathbb{CP}^{\infty}$ &   $\mathbb{CH}^{n}$ &$\mathbb{CH}^{\infty}$ \\
 \hline
A &  $\times$ & $\surd$      &$\times$     &$\surd$ & $\times$& $-$ \\
  \hline

B  &$\times$      & $\times$ & $\times$    &$\surd$   &$\times$&$\times$\\
  \hline
C & $\times$       &  $\surd$     & $\times$ & $\surd$ & $\times$& $\surd$\\
  \hline
\end{tabular}
 \end{minipage}
\end{table}

The notations
A, B, C denote the sufficient conditions in Theorem \ref{thm:CES}, Theorem \ref{thm:CPS} and  Theorem \ref{thm:CHP} respectively.
$\surd$ denote ``There exists a K\"ahler immersion'', $\times$ denote `` There does not exist a K\"ahler immersion''. $-$ denote ``Not sure''.
By Table \ref{tab:1}, we can obtain Theorem \ref{thm:1.2} immediately.

\ms
Note that the property of circular plays an important role in our proof. It makes
the Bochner coordinate of the diastasis very simple. If we remove this property,
then the problem will be too complicate to deal with.
A possible way is to consider the minimal domain which is a generalization of  circular domain.
We will explore it in the near future.

\bibliographystyle{amsplain}

\providecommand{\bysame}{\leavevmode\hbox to3em{\hrulefill}\thinspace}
\providecommand{\MR}{\relax\ifhmode\unskip\space\fi MR }
\providecommand{\MRhref}[2]{
\href{http://www.ams.org/mathscinet-getitem?mr=#1}{#2}}
\providecommand{\href}[2]{#2}

\noindent {Yihong Hao:\quad
              Institute of Mathematics, AMSS, and National Center for Mathematics and
              Interdisciplinary Sciences, Chinese Academy of Sciences, Beijing \rm{100190}, PR China.
              Email: haoyihong@126.com}\\

\noindent {An Wang:\quad
              School of Mathematical Sciences, Capital Normal University, Beijing \rm{100048}, PR China.
             Email: wangan@cnu.edu.cn}

\end{document}